\newcommand\sC{{\mathcal C}}
\newcommand\sL{{\mathcal L}}
\newcommand\sB{{\mathcal B}}
\newcommand\sN{{\mathcal N}}
\newcommand\sM{{\mathcal M}}
\newcommand\la{\lambda}
\newcommand\be{\beta}
\newcommand\e{\epsilon}
\newcommand\s{\sigma}
\newcommand\Ga{\Gamma}
\newcommand\de{\delta}
\newcommand{\NN}{\ensuremath{\mathbb{N}}}
\newcommand{\hol}{\ensuremath{\mathcal{O}}}
\newcommand{\PP}{\ensuremath{\mathbb{P}}}
\newcommand{\ra}{\ensuremath{\rightarrow}}
\newcommand{\F}{\ensuremath{\mathbb{F}}}
\def\eea{\end{eqnarray*}}
\def\bea{\begin{eqnarray*}}
\newcommand\dual{\mathrel{\raise3pt\hbox{$\underline{\mathrm{\thinspace d
\thinspace}}$}}}
\newcommand\qe{\ifhmode\unskip\nobreak\fi\quad $\Box$}       
\def\BOX{\hfill\lower.5\baselineskip\hbox{$\Box$}}
\newtheorem{theorem}{Theorem}
\newtheorem{theo}[theorem]{Theorem}
\newtheorem{rem}[theorem]{Remark}
\newtheorem{prop}[theorem]{Proposition}
\newtheorem{cor}[theorem]{Corollary}
\newtheorem{lemma}[theorem]{Lemma}
\newtheorem{example}[theorem]{Example}
\theoremstyle{definition}
\newtheorem{defin}[theorem]{Definition}
\newcommand{\sR}{\ensuremath{\mathcal{R}}}
\newenvironment{dedication}
        {\begin{quotation}\begin{center}\begin{em}}
        {\par\end{em}\end{center}\end{quotation}}
\def\tagform@#1{\maketag@@@{\ignorespaces#1\unskip\@@italiccorr}}
\newcolumntype{H}{@{}>{\lrbox0}l<{\endlrbox}} 
\begin{document}

\title[Quartic surfaces, singularities]{The number of singular points of  quartic  surfaces (char=2).}
\author{Fabrizio Catanese}
\address{Lehrstuhl Mathematik VIII, 
 Mathematisches Institut der Universit\"{a}t
Bayreuth, NW II\\ Universit\"{a}tsstr. 30,
95447 Bayreuth, Germany \\ and Korea Institute for Advanced Study, Hoegiro 87, Seoul, 
133--722.}
\email{Fabrizio.Catanese@uni-bayreuth.de}

\thanks{AMS Classification: 14J17, 14J25, 14J28, 14N05.\\ 
The author acknowledges support of the ERC 2013 Advanced Research Grant - 340258 - TADMICAMT}

\maketitle

\begin{dedication}
Dedicated to Bernd Sturmfels on the occasion of his 60-th  
 birthday.
\end{dedication}

\begin{abstract}
We show that the maximal number of singular
points of a normal quartic surface $X \subset \PP^3_K$ defined over an algebraically closed field $K$ of characteristic $2$ is at most   $20$,
and that if equality is attained, then  the  minimal resolution of $X$ is a supersingular 
 K3 surface and the singular points are $20$ nodes.

We produce examples with 14 nodes. In a sequel to this paper (in two parts, the second in collaboration with Matthias Sch\"utt)
we show that the optimal bound is indeed 14, and that if equality is attained, then  the  minimal resolution of $X$ is a supersingular 
 K3 surface and the singular points are $14$ nodes.

We also obtain some smaller upper bounds under several geometric assumptions holding 
at  one of the   singular points $P$ (structure of  tangent cone, separability/inseparability of the projection
with centre $P$).

\end{abstract}

\tableofcontents

\setcounter{section}{0}

\section{Introduction} 

Given an irreducible surface $X$ of degree $d$ in $\PP^3$, defined over an algebraically closed  field $K$, which is normal, that is, with only   finitely many singular points, one important question is to determine 
 the maximal number $\mu(d)$  of singular points that $X$ can have (observe however, see for instance \cite{bsegre}, \cite{angers}, \cite{j-r}, \cite{p-t}, \cite{duco},
 that the research has been more focused on the seemingly simpler question of finding the maximal number of nodes, that is, ordinary quadratic singularities). The case of $d=1,2$ being trivial ($\mu (1)= 0, \mu (2)=1$), the first interesting cases are for $d= 3,4$. 
 
 We have that $\mu (3)= 4$,  while $ \mu (4)=16$  if char(K) $ \neq 2$.

\bigskip

For $d=3$ (see  proposition \ref{monoid} and for instance \cite{kummers} for  more of classical references), a normal cubic surface $X$ can have at most 4 singular points, no three of them can be collinear, 
and if it does have 4 singular points, these are linearly independent, hence $X$  is projectively equivalent to the 
so-called Cayley cubic, first apparently found by Schl\"afli, see \cite{schlaefli}, \cite{cremona},  \cite{cayley}
(and then the singular points are nodes).

The Cayley cubic has the simple equation

$$X : = \{ x : = (x_0, x_1, x_2, x_3) | \s_3(x ): =  \sum_i \frac{1}{x_i} x_0 x_1 x_2 x_3 = 0\} $$ 
Here $\s_3$ is the third elementary symmetric function (the four singular points are the 4 coordinate points).

\bigskip
 
  The main purpose of this paper  is to show (Theorem \ref{mt}) that, if $ char(K) = 2$, then  $\mu(4)\leq  20$.
  
  \bigskip
  
  But, whereas  symmetric functions produce surfaces with the maximal number of singularities for degree $d=3$,
or for $d=2$ in characteristic $\neq 2$ (see for instance \cite{kummers}, \cite{m-o2}) we show 
in the last section  that for $d=4$ and 
char $=2$ a special (codimension 1) subfamily of the family of symmetric functions produce quartics with  at most $10$ singular points.

In this paper we also produce the following simple example, of  quartic surfaces with $14$ singular points,
and producing what we call  `the inseparable case' (for which we show that the maximum number of singular points is $\leq 16$):
$$X : = \{  (z, x_1, x_2, x_3) |  z^2 (x_1 x_2 + x_3^2) + (y_3 + x_1)(y_3 + x_2) y_3 (y_3 + x_1+ x_2)=0 ,$$
$$ y_3 : =  a_3 x_3 + a_1 x_1 + a_2 x_2, \  a_3 \neq 0, a_1, a_2, a_3  \ {\rm  general} \} .$$

A normal  quartic surface can have,   if char(K) $ \neq 2$, at most 16 singular points.
Indeed,  if char(K) $ \neq 2$, and $X$ is a normal quartic surface, by proposition \ref{monoid} it has at most $7$ singular points if it has a triple point,
else  it suffices to project from a double  point of the quartic  to the plane, and to use the bound for the number of singular points for a plane curve of degree $6$, which equals $15$, to establish that $X$ has at most $16$ singular points. 

Quartics with $16$ singular points  (char(K) $ \neq 2$) have necessarily nodes as singularities, and they are the so called 
 Kummer surfaces \cite{kummer} (the first examples were found by Fresnel, 1822).

There is a long history of research on Kummer quartic surfaces in char(K) $ \neq 2$, for instance it is  
 well known  that if $d=4, \mu = 16$, then $X$ is the quotient of a
principally polarized  Abelian surface
$A$ by the group $\{\pm 1\}$.

But in char $=2$ \cite{shioda} Kummer surfaces behave differently, and have at most $4$ singular points.

In this paper we show among other results   that, if $X$ is a normal quartic surface defined over an  algebraically closed field $K$ of characteristic $2$, then:
\begin{enumerate}
\item
If $X$ has a point of multiplicity $3$, then $ | Sing(X)| \leq 7$. (Proposition \ref{monoid}).
\item
If $X$ has a point of multiplicity $2$ such that the projection with centre $P$ is inseparable, then $ | Sing(X)| \leq 16$ (Proposition \ref{inseparable}, see steps I) and II)  for smaller upper bounds under special assumptions).
\item
If $X$ has a point of multiplicity $2$ such that the projection with centre $P$ is separable, 
but $P$ is uniplanar (the projective tangent conic at $P$ is a double plane), and moreover  properties i) or ii) or iii) hold, 
then $ | Sing(X)| \leq 19$ (Proposition \ref{separable}).
\item
$ | Sing(X)| \leq 20$ (Theorem \ref{mt}).
\item
If $ | Sing(X)| = 20$,  the minimal resolution
$S$ of $X$ is a minimal K3  surface and the only singularities of $X$ are nodes.
Moreover  $S$ 
has Picard number $\rho(S)$ with $ \rho(S) =   b_2(S) = 22$.
\end{enumerate}

Except for items iv), v), we use elementary methods, but with  these alone we are only able to get  the weaker inequality 
 $ | Sing(X)| \leq 25$ in iv). 

We hope to return in the future to the problem of the existence 
of  birational embeddings as quartic surfaces of K3 surfaces with large Picard number.

\subsection{Notation and Preliminaries} For a point in projective space, we shall freely use  the vector notation $(a_1, \dots, a_{n+1})$,
instead of the more precise  notation $[a_1, \dots, a_{n+1}]$, which denotes the equivalence class of the above vector.

Let $Q(x_1,x_2,x_3)=0$ be a conic over a field $K$ of characteristic $2$.

Then we can write  $$Q(x_1,x_2,x_3)= \sum_i b_i^2 x_i^2 + \sum_{i<j} a_{ij} x_i x_j= ( \sum_i b_i x_i)^2 + \sum_{i<j} a_{ij} x_i x_j.$$
One finds that, unless $Q$ is the square of a linear form,  $[a] : = (a_{23},a_{13},a_{12})$ is the only point where the gradient of $Q$ vanishes. 

Taking coordinates such that $[a] = (0,0,1)$ we have that $Q(x) = x_1 x_2 + b(x)^2$, where $b(x)$ is a
(new)  linear form: for instance, if $Q(x_1,x_2,x_3)= x_1 x_2 + x_1 x_3 + x_2x_3$, then $Q  = (x_1 + x_2)(x_1 + x_3) + x_1^2$.

We have two cases  $Q[a]=0$, hence $b_3=0$, hence $Q(x) = x_1 x_2 + b(x_1, x_2)^2$, hence changing again coordinates
we reach the normal form $Q = x_1 x_2$; while if $b_3 \neq 0$ we reach the normal form  $Q = x_1 x_2 + x_3^2$.

Hence we have just the three normal forms (as in the classical case)
$$ x_1^2 ,  \ x_1 x_2, \  x_1 x_2 + x_3^2.$$
 
\section{Singular points of quartic surfaces in characteristic 2}

We consider a quartic surface $X = \{ F =0\} \subset \PP^3_K$, where $K$ is an algebraically closed field of characteristic equal to $2$,
and such that $X$ is normal, that is, $ Sing (X)$ is a finite set. If $X$ has a point of multiplicity $4$, then this is the only singular point, while 
if $X$ contains a triple point $P$, we can write the equation, assuming that the point $P$ is the point $x_1=x_2=x_3=0$:
$$ F (x_1, x_2,x_3, z) = z G (x) + B (x) ,$$
and, setting $G_i : = \frac{\partial{G}}{\partial{x_i}}, B_i : = \frac{\partial{B}}{\partial{x_i}}$, we have 
$$ Sing(X) = \{ G(x) = B(x) = G_i z + B_i = 0, \ i=1,2,3 \} \ .$$

If $(x,z) \in Sing(X)$ and $x \in  \{ G(x) = B(x) = 0\}$, then $ x \notin Sing (\{G=0\})$, since $ x \in Sing (\{G=0\}) \Rightarrow  x \in Sing (\{B=0\})$ and then the whole line $(\la_0 z, \la_1 x) \subset Sing(X)$. Hence $\nabla (G) (x) \neq 0$ and there exists a unique
singular point of $X$ in the above line. Since the two curves  $ \{ G(x) = 0\}, \{ B(x) = 0\}$
have the same tangent at  $x$ their intersection multiplicity at $x$ is at least $2$, and we conclude:

\begin{prop}\label{monoid}
Let $X$ be  quartic surface $X = \{ F =0\} \subset \PP^3_K$, where $K$ is an algebraically closed field,
and suppose  that $ Sing (X)$ is a finite set. If $X$ has a triple point then $ | Sing(X)| \leq 7$.

More generally, if  $X$ is a degree $d$ surface $X = \{ F =0\} \subset \PP^3_K$, where $K$ is an algebraically closed field,
and we suppose  that 
\begin{itemize}
\item
$ Sing (X)$ is a finite set, and 
\item
  $X$ has a  point of multiplicity $d-1$
  \end{itemize}
  
 then $$ | Sing(X)| \leq 1 + \frac{d (d-1)}{2}.$$

\end{prop}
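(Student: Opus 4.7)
The plan is to extend the local analysis preceding the statement to a global count via B\'ezout. The author has already written the equation as $F = zG(x) + B(x)$ with $G, B \in K[x_1, x_2, x_3]$ homogeneous of degrees $d-1$ and $d$, and has shown that every singular point of $X$ distinct from $P$ lies on a unique line through $P$ and corresponds to a point $[x] \in \PP^2$ with $G(x) = B(x) = 0$, $\nabla G(x) \neq 0$, at which $\{G = 0\}$ and $\{B = 0\}$ share a tangent; in particular the local intersection multiplicity $i_{[x]}(G, B)$ is at least $2$. It therefore suffices to bound the number of such points in $\PP^2$.

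First I would use the finiteness of $\Sing(X)$ to conclude that $F$ is irreducible and reduced, and to deduce that $G$ and $B$ share no non-constant factor in $K[x_1, x_2, x_3]$. Indeed, a non-trivial factorisation $F = F_1 F_2$ with coprime $F_i$ would force the complete intersection curve $\{F_1 = F_2 = 0\} \subset \PP^3$ into $\Sing(X)$, and a multiple factor would make $X$ non-reduced along a whole surface; either is excluded. If then $G$ and $B$ shared a factor $H$, one could write $F = H(z G' + B')$, contradicting the irreducibility of $F$.

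Consequently $\{G = 0\}$ and $\{B = 0\}$ are plane curves without common component, and B\'ezout yields
\[
\sum_{[x] \in \{G=B=0\}} i_{[x]}(G, B) \;\leq\; (d-1)\, d.
\]
Each $[x]$ giving rise to a singular point of $X$ distinct from $P$ contributes at least $2$ to the left-hand side, so there are at most $d(d-1)/2$ such $[x]$. Adding $P$ itself gives the desired bound $|\Sing(X)| \leq 1 + d(d-1)/2$, specialising to $7$ for $d = 4$.

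The only step that genuinely requires care is the irreducibility / no-common-factor argument, which is precisely where the normality hypothesis enters; the remaining count is a direct consequence of the local picture already established before the statement, combined with B\'ezout.
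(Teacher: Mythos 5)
Your proof is correct and follows essentially the same route as the paper: project from the point of multiplicity $d-1$, observe that each further singular point forces the curves $\{G=0\}$ and $\{B=0\}$ to be tangent there (so local intersection multiplicity at least $2$), and conclude by B\'ezout that at most $d(d-1)/2$ such points exist besides $P$. The only difference is that you spell out the coprimality of $G$ and $B$ via irreducibility of $F$, a step the paper uses implicitly when invoking the total intersection number $d(d-1)$.
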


\begin{proof}
The second assertion follows by observing that in proving the first we never used the degree $d$, except for
concluding that the total intersection number (with multiplicity) of $G,B$ equals $ d (d-1)$.

\end{proof}
 
Assume now that we have a double point $P$ of $X$ and we take coordinates such that $P = \{  x := (x_1, x_2,x_3) =0, z=1\}$, 
thus we can write the equation 
 $$ {\bf (Taylor \ development)}: \  \ F (x_1, x_2,x_3, z) = z^2 Q (x) +  z G (x) + B (x) .$$

Then $$  \ Sing (X) = \{ (x,z) | G(x) =  z^2 Q (X)  + B(x) = z^2 Q_i (x) +  z G_i  (x) + B_i (x) =0, \ i=1,2,3\}.$$

We consider then the projection 
$$\pi_P : X \setminus \{P\} \ra \PP^2 : = \{ (x_1, x_2,x_3)\}.$$

\begin{lemma}\label{1-1}

 1) If $P$ is a singular point of the quartic $X$, the projection $Sing (X) \setminus \{P\} \ra \PP^2$ is an injective map,
except possibly for the points mapping to  the finite subscheme $\Sigma \subset \PP^2$ defined by $Q=G=B=0$,
if the three gradients $\nabla Q (x) ,  \nabla G(x) ,   \nabla B (x)$ are all proportional and  $\nabla Q (x) \neq 0$,
$\nabla G (x) \neq 0$.

2) If $x \in \Sigma$, then there is a $z$ such that $(x,z) \in Sing(X)$ if 
$$ z^2 \nabla Q (x)  + z \nabla G(x)  +  \nabla B (x)= 0.$$
\end{lemma}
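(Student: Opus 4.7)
The plan is to prove both parts by direct computation, exploiting the characteristic-$2$ identity $z_1^2-z_2^2=(z_1+z_2)^2$.

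First I would write down the equations cutting out $\Sing(X)$. From the Taylor form $F=z^2Q(x)+zG(x)+B(x)$, in characteristic $2$ one has $\partial F/\partial z=G(x)$ (since $2zQ$ vanishes), and $\partial F/\partial x_i=z^2 Q_i(x)+zG_i(x)+B_i(x)$. Combined with $F=0$, this recovers exactly the description of $\Sing(X)$ stated just above the lemma.

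For part 1), suppose $(x,z_1),(x,z_2)\in \Sing(X)\setminus\{P\}$ project to the same $x\in\PP^2$ with $z_1\neq z_2$. Subtracting the equation $z^2Q(x)+B(x)=0$ at $z_1$ and at $z_2$ yields $(z_1+z_2)^2 Q(x)=0$; in characteristic $2$ we have $z_1+z_2\neq 0$, so $Q(x)=0$ and then $B(x)=0$. Together with $G(x)=0$ this gives $x\in\Sigma$. Applying the same subtraction to the $i$-th gradient equation $z^2Q_i(x)+zG_i(x)+B_i(x)=0$ gives $(z_1+z_2)^2Q_i(x)+(z_1+z_2)G_i(x)=0$, and dividing by $(z_1+z_2)$ one obtains $G_i(x)=(z_1+z_2)Q_i(x)$; substituting back then yields $B_i(x)=z_1z_2\,Q_i(x)$. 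Hence $\nabla G(x)$ and $\nabla B(x)$ are both proportional to $\nabla Q(x)$; moreover, since $z_1+z_2\neq 0$, the vanishing of $\nabla Q(x)$ would force the vanishing of $\nabla G(x)$ as well (and conversely). This is precisely the ``except possibly'' clause of the statement.

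For part 2), if $x\in\Sigma$ then $G(x)=0$ and $z^2Q(x)+B(x)=0$ are automatically satisfied for every $z$, so the condition $(x,z)\in \Sing(X)$ collapses to the single vector identity $z^2\nabla Q(x)+z\nabla G(x)+\nabla B(x)=0$. The argument is essentially arithmetic; the only subtle point worth flagging is the characteristic-$2$ factorization $z_1^2-z_2^2=(z_1-z_2)(z_1+z_2)=(z_1+z_2)^2$, which provides the linear factor $(z_1+z_2)$ one cancels to extract the proportionality of the gradients. No delicate geometric input is needed beyond the already-known fact that $\Sigma$ is a finite subscheme of $\PP^2$ (a conic, a cubic and a quartic with no common component, in view of the normality of $X$).
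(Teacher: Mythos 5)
Your computation is essentially the paper's own: with $w:=z_1+z_2$ the paper subtracts the same pairs of equations and obtains $\nabla G(x)=w\,\nabla Q(x)$, $\nabla B(x)=z_1z_2\,\nabla Q(x)$; your direct derivation of $Q(x)=0$ from the two relations $z_j^2Q(x)+B(x)=0$ replaces the paper's remark that a line through $P$ meeting $X$ in two further singular points must lie on $X$, with the same outcome $x\in\Sigma$ (and $\Sigma$ is indeed finite, since a common factor of $Q,G,B$ would divide $F$). Part 2) as you present it is also fine: over $\Sigma$ the conditions $G(x)=0$ and $z^2Q(x)+B(x)=0$ hold automatically, so membership in $\Sing(X)$ reduces to the stated vector equation.

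There is, however, one piece of the statement you do not actually prove: the exception clause asserts not only that the three gradients at $x$ are proportional, but also that $\nabla Q(x)\neq 0$ and $\nabla G(x)\neq 0$. You only observe that the vanishing of $\nabla Q(x)$ and of $\nabla G(x)$ are equivalent, which is weaker, so the case $\nabla Q(x)=\nabla G(x)=\nabla B(x)=0$ is not excluded by your argument, even though you claim to have recovered the clause ``precisely''. The missing input is the normality of $X$: from your relations $\nabla G(x)=(z_1+z_2)\nabla Q(x)$ and $\nabla B(x)=z_1z_2\,\nabla Q(x)$, the vanishing of $\nabla Q(x)$ would force all three gradients to vanish at $x\in\Sigma$, and since $Q(x)=G(x)=B(x)=0$ this would put the whole line over $x$ inside $\Sing(X)$, contradicting its finiteness. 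This one line is exactly how the paper justifies ``it cannot be $\nabla Q(x)=0$'', and the non-vanishing conditions are genuinely used later (for instance in the proof of Proposition \ref{separable}), so they should not be dropped; with this addition your proof is complete.
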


\begin{proof}
In fact, if a line $L$ through $P$ intersects $X$  in $2$ other singular points, then $ L \subset X$,
hence 
$$ L \subset \{ (x,z) | Q(x) = G(x) = B(x) = 0\} = : P * \Sigma,$$
where $\Sigma \subset \PP^2$ is the subscheme defined by $Q=G=B=0$,
and $\Sigma$  is a 0-dimensional subscheme since $X$ is irreducible.

If $x \in \Sigma$ and $(x,z), (x,z+ w) \in Sing(X)$ are different points, from the equations 
$$ z^2 \nabla Q (x)  + z \nabla G(x)  +  \nabla B (x)= (z^2 + w^2 )\nabla Q (x)  + (z + w ) \nabla G(x)  +  \nabla B (x)=0$$
follows $   \nabla G(x) = w \nabla Q (x) $, $ \nabla B (x) = z (z+w)  \nabla Q (x)  )$.
In particular, it cannot be $\nabla Q (x) = 0$, and the three curves are all tangent at $x$.

Finally, if the three gradients are proportional, then we can find $(z,w)$ solving the equations
$   \nabla G(x) = w \nabla Q (x) $, $ \nabla B (x) = z (z+w)  \nabla Q (x)  )$; and $w\neq 0$ if $   \nabla G(x) \neq 0$.

\end{proof}

\subsection{The inseparable case}
We consider first the inseparable case where $G \equiv 0$, hence
$$ X  = \{   z^2 Q(x)  + B(x) = 0 \},  Sing (X) =  X \cap \{   z^2 \nabla Q (x) =  \nabla B (x) \}.$$

\begin{prop}\label{inseparable}
Let $X$ be  a normal quartic surface $X = \{ F =0\} \subset \PP^3_K$, where $K$ is an algebraically closed field
of characteristic  $2$,
hence $ Sing (X)$ is a finite set. If $X$ has a double point  $P$ as in (Taylor) 
such that the projection with centre $P$ is an inseparable double cover of $\PP^2$, i.e., $G \equiv 0$,
 then  $ | Sing(X)| \leq 16$, and there exists  a  case with $ | Sing(X)| =  14$.

\end{prop}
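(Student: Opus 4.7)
Setting up, under the inseparable hypothesis $G\equiv 0$ we have $F = z^2 Q(x) + B(x)$, and in characteristic $2$ the partial $\partial F/\partial z \equiv 0$ vanishes identically; thus
\[
\Sing(X)\setminus\{P\} = \bigl\{(x,z) : z^2 Q(x) + B(x) = 0,\ z^2 \nabla Q(x) = \nabla B(x)\bigr\}.
\]
My first step would be to observe that the projection $\pi_P$ is injective on $\Sing(X)\setminus\{P\}$: this follows from Lemma \ref{1-1}(1), whose exceptional clause requires $\nabla G(x)\neq 0$ and is therefore vacuous here. Equivalently, on each line through $P$ the fibre equation $z^2 = B(x)/Q(x)$ admits a unique solution, because Frobenius is bijective on $K$. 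The task thus reduces to bounding the image $\pi_P(\Sing(X)\setminus\{P\})\subset \PP^2$.

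Next I would split along the three normal forms for $Q$ recalled in the preliminaries. In the generic cases $Q = x_1 x_2$ and $Q = x_1 x_2 + x_3^2$ the gradient is $\nabla Q(x) = (x_2, x_1, 0)$, so the polar linear map has image the line $\{y_3 = 0\}$. The proportionality $z^2 \nabla Q(x) = \nabla B(x)$ therefore forces the cubic condition $B_3(x) = 0$, while the remaining equations $z^2 x_2 = B_1$ and $z^2 x_1 = B_2$ determine $z^2$, and hence $z$, uniquely; their compatibility $x_1 B_1 + x_2 B_2 = 0$ is automatic on $\{B_3=0\}$ by Euler's identity $\sum_i x_i B_i = 4 B \equiv 0$. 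Substituting the resulting value of $z^2$ into $F=0$ and clearing the denominator $x_2$ produces the quintic $B_1 Q + x_2 B = 0$. Hence $\pi_P(\Sing(X)\setminus\{P\})$ lies in the intersection of a cubic and a quintic in $\PP^2$, and B\'ezout's theorem yields at most $3\cdot 5 = 15$ points, whence $|\Sing(X)| \le 16$.

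The main obstacle will be to verify that the cubic $\{B_3 = 0\}$ and the quintic $\{B_1 Q + x_2 B = 0\}$ share no common component, so that B\'ezout applies: a careful sub-analysis is required when $\{B_3 = 0\}$ becomes reducible (for example when $\{x_2 = 0\}$ is one of its components) or when $B_1$ shares a factor with the quintic. In addition the ``rank $1$'' normal form $Q = x_1^2$ must be handled separately: then $\nabla Q \equiv 0$, the singular-point condition reduces to $\nabla B(x) = 0$, and the three cubic equations $B_1 = B_2 = B_3 = 0$ together with normality of $X$ force a much smaller bound. These degenerate sub-cases should also yield the ``steps I) and II)'' smaller bounds referenced in the statement under additional geometric assumptions.

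To exhibit a quartic attaining $|\Sing(X)| = 14$, I would verify directly on the explicit family
\[
F = z^2(x_1 x_2 + x_3^2) + (y_3 + x_1)(y_3 + x_2)\, y_3\, (y_3 + x_1 + x_2),\qquad y_3 = a_3 x_3 + a_1 x_1 + a_2 x_2,
\]
with $a_3\neq 0$ and $(a_1,a_2,a_3)$ general, that $X$ is normal and has exactly $14$ ordinary double points. The choice of $B$ as a product of four linear forms makes the cubic $\{B_3 = 0\}$ and the quintic $\{B_1 Q + x_2 B = 0\}$ explicit enough that their intersection points and the induced nodes of $X$ can be enumerated by direct computation for generic $(a_1,a_2,a_3)$.
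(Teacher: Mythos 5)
Your route is essentially the paper's: injectivity of $\pi_P$ on $Sing(X)\setminus\{P\}$ via Lemma \ref{1-1} (the exceptional clause being vacuous since $\nabla G\equiv 0$), reduction to the plane scheme cut out by the cubic $B_3=0$ and the quintic $Q B_1 + x_2 B=0$, B\'ezout giving $15+1=16$, and the same explicit family $B=(y_3+x_1)(y_3+x_2)y_3(y_3+x_1+x_2)$ for the value $14$. But there is a genuine gap exactly where you locate "the main obstacle": you never rule out a common component of the cubic and the quintic, and since the bound $16=1+3\cdot 5$ has no slack, the entire estimate collapses if such a component exists. This is not a routine sub-analysis of when $B_3$ is reducible; the paper's point is different and essential: if the scheme $\sB=\{B_3= Bx_2+QB_1 = Bx_1+QB_2=0\}$ contained an irreducible curve $C$, then over the generic point of $C$ (where $x_1\neq 0$ or $x_2\neq 0$, or $Q\neq 0$) one can solve $z^2=B_1/x_2$ (resp.\ $B_2/x_1$), and the Euler relation makes all the remaining singularity equations hold, so the cone over $C$ would meet $X$ in a curve of singular points, contradicting normality; the case $C=\{x_1=0\}$ (or $\{x_2=0\}$) needs its own small argument, which the paper also supplies. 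Without this lifting argument your Bézout count is unjustified.

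Two further points are asserted rather than proved. For $Q=x_1^2$ you say that $B_1=B_2=B_3=0$ plus normality "forces a much smaller bound": the needed fact is that the critical scheme of $B$ either is finite (then Euler's relation and B\'ezout give at most $(d-1)^2=9$ points, so $\leq 10$ in total), or has one-dimensional part supported on $x_1^a$, which one divides out; this is the paper's Step I and it is not automatic. Finally, the existence of a quartic with exactly $14$ singular points is part of the statement, so the "direct computation for generic $(a_1,a_2,a_3)$" must actually be carried out (the paper's Step IV enumerates the $1+6+1+2+2+2$ points and checks they are distinct for general parameters); as written, your proposal only names the candidate family. Incidentally, your uniform treatment of $Q=x_1x_2$ and $Q=x_1x_2+x_3^2$ is fine for the claimed bound $\leq 16$ (the paper gets the sharper $13$ in the split-conic case by using the quartic $B+x_1B_1$ instead of the quintic), so the loss there is only of sharpness, not of correctness.
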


\begin{proof}
As in the proof of proposition \ref{monoid}, if a singular point $(x,z)$
satisfies $Q(x)=0$, then $B(x)=0$,
 and since $ Sing(X)$ is finite it must be   $\nabla (Q) (x) \neq 0$:  under this assumption
$(x,z)$ is the only  singular point lying above
the point $x \in \Sigma = \{ Q(x)= B(x)=0\}$.

Hence, in view of Lemma \ref{1-1} the projection $Sing(X) \setminus \{P\} \ra \PP^2$ is injective.

Conversely, if $x \in \Sigma  = \{ Q(x)= B(x)=0\}$,  it is not possible that $\nabla Q (x) =  \nabla B (x)=0$,
while for $\nabla Q (x) = 0,  \nabla B (x) \neq 0$ there is no singular point lying over $x$,
and for $\nabla Q (x)  \neq 0$ there is at most one singular point lying over $x$, and one iff 
$\nabla Q (x) ,  \nabla B (x) $ are proportional vectors. 
  
Hence in the last  case the intersection multiplicity of $Q,B$ at $x$ is at least $2$, and
in particular  over $\Sigma$ lie at  most $4$ singular points.

{\bf Step I) 
We first consider the case where $Q$ is a double line, and show that in this case $X$ has at most $10$ singular points.}

\medskip

We can in fact choose coordinates such that $ F = z^2 x_1^2 + B(x)$, hence the singular points $(x,z)$ are determined
by the equations $F=  \nabla B (x)=0$. And we have a bijection between $Sing(X)$ and the points of the plane with coordinates $x$ where
 $ \nabla B (x)=0$ and   $x_1 \neq 0$. In fact the points where  $ \nabla B (x)=x_1 = 0, B \neq 0$, do not come from singular points,
 while the points with $ \nabla B (x)=x_1 = B = 0$ would provide   infinitely many singular points.

 In general, if $\{ g(x) = 0 \}$ is a plane curve of even degree $d=2k$ , from the Euler formula $\sum_i x_i g_i \equiv 0$,
  we infer that if the critical scheme $\sC_g := \{\nabla g (x) =  0\}$  is finite, then its cardinality is at most $(d-1)^2$ by the theorem of B\'ezout.
  Take in fact a line $L$ not intersecting this scheme $\sC_g$, and assume that $L = \{ x_3 = 0\}$: then 
  $\{ \nabla g (x) =  0\} = \{ g_1 = g_2=0, \ x_3 \neq 0 \}$, and this set has, with multiplicity, cardinality $(d-1)^2$ unless 
   $g_1=  g_2 = x_3 = 0$ is non empty.
   
   We are done if $\sC_B$ is finite. In the contrary case, since $Sing(X)$ is finite, the only common divisor of the $B_i$'s
   is $x_1^a$, and, replacing $B_i$ by $B_i /  x_1^a$, we get a finite subscheme and a better estimate than
   $9$.

   Hence Step I) is proven.
   
   \bigskip

   Remark:   we get smaller upper bounds if $g$ is reducible: for instance, if $g = f \ell$, where $\ell$ is a linear form, then it is easy to see, taking coordinates where $\ell = x_1$,
   that $|\{ \nabla g (x) =  0\} | \leq 1+  (d-2)(d-1) $,
   a bound which for $d=4$ yields $7$ but for $d=6$ yields $21$).
   
\bigskip

 Also for later use, we prove
   
   \begin{lemma}\label{quartic}
   Assume that $B(x_1, x_2,x_3) \in K [x_1, x_2,x_3]$ is a homogeneous polynomial of degree $4$, and that $K$ is an algebraically closed field of characteristic $2$.
   
   Then, if the scheme $\sC_B := \{\nabla B (x) =  0\}$ is finite, its cardinality is at most $7$.
   \end{lemma}
   \begin{proof}   
   Since $x_3 B_3 = x_1 B_1 + x_2 B_2$, we get that if $B_1=B_2 = 0$, then $x_3 B_3=0$.
   
   If $\{B_1=B_2 = 0\}$ is infinite, then on its divisorial part $C$ we have $x_3 =0$, since $\sC_B$ is finite.
   
   This means that $B = x_2 B' + \be(x_1,x_2)^2$, hence $\nabla B=0 \Leftrightarrow \nabla (x_2 B')=0$
   and we can apply the previous remark to infer $ | \sC_B|  \leq 7.$
   
   If $\sB' : = \{B_1=B_2 = 0\}$ is finite, take a line containing $\de \geq 2$ points of $\sB'$, 
   say $x_3=0$, and let $x_1, x_2$ be coordinates 
   both not vanishing at these points.
   
   Then by Bezout's theorem $\sB'$ consists of at most $9$ points, counted with multiplicity.
   
   At any of the $\de \geq 2$ points of  $\sB' \cap \{x_3=0\}$, the equation 
   $x_3 B_3 = x_1 B_1 + x_2 B_2$ shows that either $B_3$ does not vanish, or $x_3 B_3$ vanishes of multiplicity at least $2$,
   hence a linear combination of
   $\nabla B_1, \nabla B_2$ vanishes, implying that  the local  intersection  multiplicity of $B_1, B_2$ is $\geq 2$.
   
   Hence $ | \sC_B| \leq 9 - \de \leq 7.$

   \end{proof}
   
   \bigskip

{\bf Step II) 
We consider next the case where $Q$ consists of two lines, and show that in this case $X$ has at most $13$ singular points.}

\medskip

We can in fact choose coordinates such that $ F = z^2 x_1 x_2 + B(x)$, hence 
$$Sing(X) = \{z^2 x_1 x_2 + B(x)= 0 , z^2  x_2 + B_1(x)=  z^2 x_1  + B_2(x)=  B_3=0\}.$$
Hence the singular points satisfy 
$$ (**) \ B_3= B + x_1 B_1 = B + x_2 B_2 = 0,$$
where the last equation follows form the first two, in view of the Euler relation.

Let $(x)$ be  one of the solutions of $(**)$: we find at most one  singular point lying above it
if $x_1 \neq 0$, or $x_2 \neq 0$.
If instead $x_1 = x_2 =0 $ for this point, then $B =B_3 =0$, and either there is no singular point of $X$ lying over it,
or $(x)$ is a singular point of $B$, and we get infinitely many singular points for $X$, a contradiction.
Hence  it suffices to bound the cardinality of this set.

If the solutions of $(**)$ are a finite set, then their cardinality is $\leq 12$, and also $|Sing(X)| \leq 13$.

If  instead $(**)$ contains an irreducible curve $C$,  this factor $C$ cannot be   $x_1=0$ or $x_2=0$, since 
 for instance in the first case then  $ x_1 | B \Rightarrow x_1 |F$, a contradiction.
 
 We find then infinitely many points satisfying $(**)$ and with $x_1x_2 \neq 0$. Over these lies a singular 
 point if $ z^2 = B_1/ x_2 = B_2 /x_1 =  B / (x_1 x_2)$ is satisfied. 
 
 But if the first equation is verified, then also the others follow from $x_1 B_1 + x_2 B_2 = x_3 B_3 = 0$,
 respectively $ B + x_1 B_1 = 0$.

Hence the existence of such a curve $C$ leads to the existence of infinitely many singular points of $X$
 and Step II) is proven by contradiction.

\bigskip

{\bf Step III)
We consider next the case where $Q$ is smooth, and show that in this case $X$ has at most $16$ singular points.}

\medskip

We can in fact choose coordinates such that $Q(x) =  x_1 x_2 +  x_3^2$,
hence $ F = z^2  Q(x) + B(x) = z^2 x_1 x_2 + z^2 x_3^2 +  B(x)$.

Here
$$Sing(X) = \{z^2 x_1 x_2 +  z^2 x_3^2 + B(x)= 0 , z^2  x_2 + B_1(x)=  z^2 x_1  + B_2(x)=  B_3=0\}.$$
Hence the singular points satisfy 
$$ (*) \ B_3= B + x_1 B_1 + z^2 x_3^2 = B + x_2 B_2 + z^2 x_3^2 = 0,$$
and again here  the last equation follows form the first two, in view of the Euler relation.

If $x_1 \neq 0$, or $x_2 \neq 0$, there is at most one singular point lying over (we mean always:  a point different from $P$) 
the point $x$. The same for $x_1= x_2=0$
since then $x_3^2 \neq 0$.

Multiplying the second equation of (*) with $x_2$, and the last with $x_1$, we get, for the singular points,
$$ B x_2  + Q B_1  =B x_2  + x_1 x_2 B_1 + B_1 x_3^2 = 0$$
$$ B x_1  + Q  B_2 = B x_1  + x_1 x_2 B_2 + B_2 x_3^2 =  0.$$

Hence the singular points different from $P$ project injectively into the set
$$ \sB : = \{x|  B_3 = B x_2  + Q B_1  =B x_1  + Q  B_2= 0 \} \supset  \{x|  B_3 = B   = Q  = 0 \}.$$

By B\'ezout $\sB$ consists of  at most $15$ points, unless  $B_3 , B x_2  + Q B_1,  B x_1  + Q  B_2$ have a
 common component.

If $\sB$  contains an irreducible  curve $C$, since either $x_1 \neq 0$ or $x_2 \neq 0$ for the general point of $C$,
 there exists $ i \in \{1,2\}$ such that the cone $\Ga$ over $C$ has as open nonempty subset the cone $\Ga'$ over $C' : = C \setminus \{x_i=0\}$ which is contained in   the set of  solutions of $(*)$. Hence $\Ga$ is contained in   the set of  solutions of $(*)$.

Argueing similarly,  if  $C \neq \{x_1=0\}, C \neq \{x_2=0\}$, the cone $\Ga$ over $C$ has  as nonempty open subset 
the cone $\Ga''$ over $C'' : = C \setminus \{x_1 x_2 =0\}$ which is contained in the solution set 
of $z^2  x_2 + B_1(x)=  z^2 x_1  + B_2(x)=  B_3=0$, hence $\Ga$ is contained in this solution set. 
 We conclude that  $Sing(X)$ contains $ X \cap \Ga$,
hence it is an infinite set, a contradiction.

By symmetry, it suffices to exclude the possibility that $\sB$ contains $C = \{x_1 = 0\}$.
In this case we would have that  $x_1 | B_3, B_2, B x_2 + Q B_1$. Since $x_1 $ does not divide $Q$, it follows  that the
plane $\Ga  = \{x_1 = 0\}$ has  as nonempty open subset the cone over $C \setminus \{Q=0\}$
which is contained in  the set $\{z^2  x_2 + B_1(x)=  z^2 x_1  + B_2(x)=  B_3=0\}$,  hence
$\Ga$ is contained in this set and  $Sing(X)$ contains $ X \cap \Ga$, again
 a contradiction.

Hence Step III) is proven.

\bigskip

{\bf Step IV) 
We  construct now a case where there are other $13$ singular points beyond $P$, hence $X$ has $14$ singular points.}

By the previous steps, we may assume that $Q(x) =  x_1 x_2 +  x_3^2$, and we take $B = y_1 y_2 y_3 y_4$,
where  $y_1,  y_2,  y_3$ are independent linear forms and  $y_4 =  y_1 + y_2 + y_3$.

Recall that 
$$ Sing (X)  = \{   z^2 Q(x)  + B(x) = 0 ,   z^2 \nabla Q (x) =  \nabla B (x) \}.$$
Multiplying the second (vector) equation by $Q$ we get 
the equation 
  $$ (*) \  B(x)   \nabla Q (x) =  Q(x)  \nabla B (x) \Leftrightarrow   \nabla (QB) (x) =0.$$

  The solutions of (*) consist of 
  \begin{enumerate}
  \item
  the points where $Q(x) = 0 $, hence $Q=B=0$: these are precisely $8$ points, for general choice of the linear forms $y_i$;
  and they are not projections of singular points of $X \setminus{P}$, as the gradients $ \nabla B,  \nabla Q$ are linearly independent
  at them;
    \item
  the points where $Q(x) \neq 0 $, $B(x)=0$,  hence $ B = \nabla B =0$; 
   that is, the $6$ singular points $y_i = y_j=0$ of $\{ B(x)=0\}$, 
  giving rise to the $6$ singular  points of $X$ with $z=0$,
  \item
  (possibly) a point where $\nabla Q (x) =    \nabla B (x)=0$ but $Q \neq 0, B\neq0$, this comes from exactly one singular point of $X$;
    \item
  points satisfying $$(**) \ Q(x) \neq 0 \neq  B(x), \nabla (Q) \neq 0 \neq \nabla (B), B(x)   \nabla Q (x) =  Q(x)  \nabla B (x).$$.
  \end{enumerate}
  
Observe that  $\nabla Q (x) = (x_2,x_1,0) $ vanishes exactly at the point $x_1=x_2=0$, while 
in the coordinates $(y_1, y_2, y_3)$ we have
$$\ ^t \ \nabla (B) (y) = (y_2  y_3 (y_2 + y_3), y_1  y_3(y_1 + y_3),y_1 y_2 (y_1 + y_2)), $$
hence the gradient $\nabla (B) (y)$ vanishes exactly at the $6$ singular points of $B$, and at the point $y_1+ y_2=y_1 +y_3=0$.

We have that this point is the point $x_1=x_2=0$ as soon as $y_1 = y_3 + x_1, y_2 = y_3 + x_2$ (then $y_4 =  y_3 + x_1 + x_2$).

Going back to the notation of Step III), we consider then the set  
$$ \sB : = \{x|  B_3 = B x_2  + Q B_1  =B x_1  + Q  B_2= 0 \} \supset  \{x|  B_3 = B   = Q  = 0 \}.$$

Since $ B = (y_3 + x_1)(y_3 + x_2) y_3 (y_3 + x_1+ x_2)$, if we set $ y_3 = a_1 x_1 + a_2 x_2 + a_3 x_3$, with $a_3 \neq 0$, 
then using
$$B_i = (y_1 y_2 y_3 y_4)_i = \sum_1^4  \frac{B}{y_j} (y_j)_i, \ $$
 $$ B_3 = 0 \Leftrightarrow   \sum_1^4  \frac{B}{y_j} = 0, \ B_1 = a_1 B_3 + y_2 y_3 y_4 + y_1 y_2 y_3, \ B_2 = a_2 B_3 + y_1 y_3 y_4 + y_1 y_2 y_3,$$
 the equations of $ \sB$ simplify to
 $$ B_3 = 0 , \ y_2 y_3 ( y_1 y_4 x_2 + Q (y_1 + y_4)) = 0 , y_1 y_3 (y_2 y_4 x_1 + Q (y_2 + y_4) = 0.$$ 
 
 Since we are left with finding solutions where $B\neq0$, the equations reduce to
 $$ B_3 = 0 , \  [(y_3 + x_1) (y_3 + x_1+ x_2)   + Q ] x_2  = 0 ,  [(y_3 + x_2) (y_3 + x_1+ x_2)  + Q ] x_1 = 0.$$
 We already counted the point $x_1 = x_2 =0$. If $x_1 =0$ and $x_2 \neq 0$, 
 we find $$ B_3 = y_3^2 + y_3 x_2 + x_3^2 = 0 \ ,$$
 but since we observe that $B_3 \equiv 0$ on the line $x_1=0$, we get the two points 
 $$ x_1 = y_3^2 + y_3 x_2 + x_3^2 = 0 \ \Leftrightarrow x_1=0, (a_3^2 +1) x_3^2 + (a_2^2 + a_2) x_2^2 + a_3 x_2 x_3=0.$$
 Similarly 
  if  $x_2 =0$ and $x_1 \neq 0$
 we get  the two points $$ x_2 = y_3^2 + y_3 x_1 + x_3^2 = 0 \  \Leftrightarrow x_2=0, (a_3^2 +1) x_3^2 + (a_1^2 + a_1) x_1^2 + a_3 x_1 x_3=0.$$
 If both $x_1 \neq 0, x_2 \neq 0$, 
 we find $$ B_3 =x_1^2 + x_2^2 + y_3 (x_1 + x_2) = (y_3 + x_1)^2 + y_3   x_2   + x_3^2= 0 \ .$$
 The second equation (of the three above)  is reducible, it equals $$(x_1 + x_2 + y_3) (x_1 + x_2)=0.$$
  Again $B_3 \equiv 0$ on the line $x_1=x_2$,
 while the points with $(x_1 + x_2 + y_3)=0$ yield the line $y_4=0$ which is contained in $B$, hence we do not need to consider these points.
 
 Hence we get  two more solutions:
 $$ x_1 = x_2 , y_3^2 + x_1^2 + y_3   x_1   + x_3^2= 0,\Leftrightarrow$$
 $$ \Leftrightarrow x = (x_1, x_1, x_3) , (a_3^2 +1) x_3^2 + (a_2^2 + a_1^2 + a_2 + a_1+ 1) x_1^2 + a_3 x_1 x_3=0.$$
 and $X$ has exactly $1 + 6 + 1 + 2+2+2= 14$ singular points,
 $$ (1): x=0, z=1, $$
 $$ (6):  z=0, x = (0,1,a_2), (0,1, 1 + a_2), (1,0, a_1), (1,0, 1 + a_1), (1,1, a_1 + a_2), (1,1,  1 + a_1 + a_2), $$
 $$ (1): ( 1,0,0,1),$$
 $$ (2):  x= (0, 1, b) , \ (a_3^2 +1) b^2 + (a_2^2 + a_2)  + a_3 b =0$$
 $$ (2):  x= (1,0, c) , \ (a_3^2 +1) c ^2 + (a_1^2 + a_1)  + a_3 c =0.$$
 $$ (2):  x= (1,1, d) , \  (a_3^2 +1) d^2 + (a_2^2 + a_1^2 + a_2 + a_1+ 1)  + a_3 d =0.$$
 One can now verify that, for general choice of the $a_i$'s (which can be made explicit requiring 
 $ b \neq a_2, 1 + a_2$, $ c \neq a_1, 1 + a_1$, $ d \neq a_1 + a_2, 1 + a_1 + a_2$), we obtain $14$ distinct points.
 
\end{proof}
 
 \bigskip
 \subsection{The separable case}
\begin{prop}\label{separable}
Let $X$ be  a normal quartic surface $X = \{ F =0\} \subset \PP^3_K$, where $K$ is an algebraically closed field
of characteristic  $2$
(so  $ Sing (X)$ is a finite set). 

I) If $X$ has a double point  $P$  
such that the projection $\pi_P$ with centre $P$ is a separable double cover of $\PP^2$, then  $ | Sing(X)| \leq 25$.

II) If moreover $P$ is a uniplanar double point (i.e., the tangent quadric at $P$ is a double plane  $Q = x_1^2$),  and 

i) $x_1$ divides $G$,  then $ | Sing(X)| \leq 17$;
while  

ii) if  $  G(x) = x_1^2 L(x_1, x_2) + x_2 x_3 N (x_1 , x_2),$ then $ | Sing(X)| \leq 19$,

  (iii) if $$ \  G(x) = x_1^2 L(x_1, x_2,x_3) +  x_1 x_2  M(x_2, x_3) +  x_2^3,$$
then also $ | Sing(X)| \leq 19$.
\end{prop}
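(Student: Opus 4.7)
The strategy is to project from $P$ to $\PP^2$, reducing the count of $Sing(X)$ to a plane intersection problem controlled by B\'ezout's theorem. Since $F_z = 2zQ + G = G$ in characteristic $2$, every singular point $(x,z) \in Sing(X) \setminus \{P\}$ projects to the cubic $C := \{G = 0\}$, and by Lemma \ref{1-1} this projection is generically injective. Thus the task becomes counting the image in $\PP^2$, which lies in $C$ and satisfies additional equations coming from the gradient conditions $z^2 Q_i + zG_i + B_i = 0$ for $i = 1,2,3$.

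For part I, these three equations are linear in the ``Veronese vector'' $(z^2, z, 1)$, and by Euler together with $G = 0$ only two of them are independent (taking, say, $i = 1, 2$ in the chart $x_3 \neq 0$). The kernel of the resulting $2\times 3$ matrix is generically one-dimensional, spanned by the vector of $2\times 2$ minors
$$(e_1, e_2, e_3) = (G_1 B_2 - G_2 B_1,\ B_1 Q_2 - B_2 Q_1,\ Q_1 G_2 - Q_2 G_1),$$
of respective degrees $5, 4, 3$. In order for $(z^2, z, 1)$ to lie in this kernel we need the Veronese-type condition $e_1 e_3 = e_2^2$, which is a polynomial equation of degree $8$. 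B\'ezout applied to $\{G = 0\} \cap \{e_1 e_3 = e_2^2\}$ gives at most $3 \cdot 8 = 24$ points; common components between the two curves would produce a positive-dimensional singular locus, contradicting normality. Adding $P$ itself, we conclude $|Sing(X)| \leq 25$.

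For part II, the uniplanar hypothesis $Q = x_1^2$ forces all $Q_i$ to vanish in characteristic $2$, so $e_2$ and $e_3$ are identically zero and the Veronese condition becomes trivial. Instead, the matrix $\bigl(\begin{smallmatrix} 0 & G_1 & B_1 \\ 0 & G_2 & B_2 \end{smallmatrix}\bigr)$ must have rank at most $1$, i.e., $G_1 B_2 - G_2 B_1 = 0$, a degree-$5$ equation; B\'ezout on $\{G = 0\} \cap \{G_1 B_2 = G_2 B_1\}$ yields at most $3\cdot 5 = 15$ points. The subcases refine this using the specific structure of $G$. In case (i), $G = x_1 G'$ splits $C$ into the line $\{x_1 = 0\}$ and the conic $\{G' = 0\}$: on the line, the restriction $B(0,x_2,x_3)$ is a binary quartic whose at most two multiple factors control the singular points of $X$ on the (up to four) lines through $P$ contained in $\{x_1 = 0\}$, and on the conic the substitution $w = zx_1$ exhibits $X$ as a double cover branched over $\{G' = 0\}$, allowing a sharper low-degree B\'ezout estimate. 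In cases (ii) and (iii), the special form of $G$ similarly singles out a distinguished component of $C$ (a line) on which the singularities can be enumerated explicitly, while on the residual component a refined B\'ezout estimate yields the stated $19$ bound.

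The main obstacle is the bookkeeping of common components and base points: whenever $\{G = 0\}$ shares an irreducible component with the auxiliary curve, one must invoke normality of $X$ to exclude a positive-dimensional singular locus; and at points where the gradient data $\nabla Q$, $\nabla G$, $\nabla B$ vanish simultaneously (the ``base scheme'' where the Veronese formula $z = e_2/e_3$ degenerates), one must verify directly that at most finitely many singular points of $X$ project to them. In the uniplanar subcases the balance between the Bezout contribution on the ``generic'' component and the explicit count on the distinguished component is delicate, and each subcase requires its own careful accounting.
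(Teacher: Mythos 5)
Your setup (project from $P$, note $F_z=G$ so the image lies on the cubic $\{G=0\}$, view the gradient equations as linear conditions on $(z^2,z,1)$, and eliminate $z$ via the $2\times 2$ minors) is essentially the paper's: your $e_1,e_2,e_3$ are, up to a factor of $Q$, the paper's polynomials $f_{ij}$. But the step on which your whole count for part I rests --- ``common components between the two curves would produce a positive-dimensional singular locus, contradicting normality'' --- is not justified, and it is exactly where the paper's proof does its hard work. In characteristic $2$ the Euler relation gives $\sum_i x_iF_{x_i}+zF_z=4F=0$, so vanishing of all partial derivatives does \emph{not} imply $F=0$: the conditions $z^2Q_i+zG_i+B_i=0$ ($i=1,2,3$) together with $G=0$ do not force $z^2Q+B=0$. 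Hence if $\{G=0\}$ and $\{e_1e_3+e_2^2=0\}$ share a component $C$, a general point of $C$ admits a $z$ solving the gradient equations but need not lift to a point of $X$ at all; normality of $X$ is not contradicted, and your B\'ezout bound collapses in precisely this case. The paper deals with it by adjoining the further degree-$8$ equation obtained by eliminating $z$ between $zG_3+B_3=0$ and $z^2Q+B=0$, namely $BG_3^2=QB_3^2$ (available because $Q_3=0$ in the normal forms $Q=x_1x_2$, $x_1x_2+x_3^2$), and only when this too vanishes identically on $C$ does one manufacture infinitely many singular points; the case analysis on the degree of the common factor is what produces the worst-case bound $1+3\cdot 8=25$ --- the ``clean'' B\'ezout situation actually gives $1+21=22$. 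Note also that when $Q=x_1^2$ all $Q_i$ vanish, so your degree-$8$ eliminant is identically zero and your part-I argument does not cover the uniplanar case at all; the rank-one condition $G_1B_2=G_2B_1$ you propose there has the same unaddressed common-component problem. A smaller omission: injectivity of the projection can fail over $\Sigma=\{Q=G=B=0\}$ (Lemma \ref{1-1}), and those possibly doubled fibres must be accounted for, as the paper does.

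For part II your text is a plan rather than a proof, and it leans on a false structural claim: in cases (ii) and (iii) the cubic $G$ need not contain a line as a component (it can be irreducible; the paper must explicitly handle, e.g., $G=x_1^3+x_2^2x_3$ in case (ii)), so the ``distinguished line component'' on which you propose to enumerate singular points does not exist in general. The bounds $17$ and $19$ arise in the paper from a delicate accounting: at most $4$ points over $\{x_1=0\}$ in case (i) since $x_1\nmid B$, at most $8$ (resp.\ $5\cdot\deg$) points over components where the degree-$5$ minors give a finite intersection, and, whenever a common factor $q$ of $G$ with the minors appears, auxiliary degree-$6$ equations such as $Bq_i^2+B_i^2=0$ or $B(L_2x_1+N_1x_3)^2+B_2^2=0$, whose identical vanishing on $\{q=0\}$ \emph{would} produce an explicit $z$ over each general point and hence infinitely many singular points --- that is the only place normality may be invoked. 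None of this machinery, nor the resulting arithmetic (e.g.\ $1+4+12=17$, $1+6\deg q+5(3-\deg q)\le 19$), is supplied by your sketch, so subcases (i)--(iii) remain unproved.
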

 \begin{proof}
 
 I)
 
We know that $Sing (X)$ is the set
$$   \{ G(x) =  z^2 Q (x)  + B(x) = z^2 Q_i (x) +  z G_i  (x) + B_i (x) =0, \ 1 \leq i \leq 3\},$$
and that the cubic polynomial $G(x) $ is not identically zero. 

Except for the equation $G(x)=0$, the other equations involve $z$ and we look for the set $M$ of points $x$
such that there exists a common solution $z$ for these $4$ equations. Because of Lemma \ref{1-1}
we know that the set $Sing(X) \setminus \{P\} \setminus \{ Q(x)=0\}$ injects into $M$. 

\smallskip

The strategy shall be to  find some equations defining  sets $M' \subset \sB$ larger than $M$,
and try to bound the cardinality of $\sB$. In the end, if we cannot show that $\sB$ is finite, we shall show that 
$M'$ is exactly the image of  $Sing(X) \setminus \{P\}$, thereby showing that $M' = M$, hence it is finite,
and then bounding the  cardinality of $M'$ from above.

Multiplying the last three equations by $Q$ we obtain:
$$ B Q_i + z Q G_i + Q B_i =0, i=1,2,3.$$

If now $Q$ is not a square, we may assume that $Q=x_1 x_2$ or $Q=x_1 x_2 + x_3^2$, hence for $i=3$
 we get $$Q ( z G_3 + B_3)=0.$$

and eliminating $z$  we get $ Q f_{ij} = 0, \ i,j = 1,2,3$ where 
 $$  f_{12} : = (B Q) _1   G_2 -  (B Q) _2   G_1 = Q (B_1 G_2 + B_2 G_1 ) + B ( Q_1 G_2 + Q_2 G_1) =$$
$$ =  Q (B_1 G_2 + B_2 G_1 ) +  B  x_3 G_3,$$
because of Euler's formula, and 
 $$   f_{i3} : = Q G_i B_3 + G_3 (B Q_i +Q B_i) = Q ( G_i B_3 + G_3B_i) + G_3 B Q_i \ i=1,2.$$
 
 Hence either $Q=0$ or $ f_{ij}  =0, \forall  i,j =1,2,3.$
 
 Observe that the singular points where $Q(x)=0$ are the points of the   finite subscheme  $\Sigma$ defined by 
  $Q(x) = G(x) =B(x) = 0$, such that 
 the cone $ P * \Sigma$ is contained in $X$. 
 
By Lemma \ref{1-1}  for  the points of $Sing(X)$ lying over $\Sigma$,  $z$ is uniquely determined unless the gradients  
$\nabla G, \nabla Q, \nabla B$ are all proportional, with $\nabla Q, \nabla G \neq 0$.

Moreover
 the set $\Sigma$ has cardinality at most $8$, equality holding only if $ Q | G$ and $Q$ is not a square.
  But in this case 
 over a point of $\Sigma$ lies a point of $Sing(X)$ only if the gradients
 $\nabla B, \nabla Q$ are linearly dependent, hence  the number of such singular points is at most $8$,
 and at most $6$ if $ Q$ does not divide $G$, as it follows by the theorem of B\'ezout also in the case where $Q,G$ have a common factor of degree $1$.
 
 The singular points of $X$ for which $Q(x) \neq 0$  project in a $1-1$-way  to the points of the subscheme  
 $$\sB : = \{ x| G(x) = f_{ij}(x) = 0, i,j =1,2,3\}.$$
 If the subscheme $\sB$ is   finite, it is contained in a  finite set of the form 
 $$\{ G(x) =  \sum_{ij} \la_{ij}f_{ij}(x) = 0 \},$$ for general choice of the $\la_{ij}$'s,
 hence its  length is at most $ 3 \cdot 7 = 21$ by the B\'ezout theorem.
 
 It is immediate to see that in this case  the scheme   $ \Sigma = \{ Q(x) = G(x) =B(x) = 0\}$
 is a  subscheme of $ \sB $, so we conclude  that   the number of singular points of $X$ is at most $ 1 +  21= 22$.
 
 If the subscheme $\sB$ is not finite, we observe that singular points fulfill also the degree ten equation
 obtained by eliminating $z$,
 $$ B Q G_i^2 = (BQ_i + QB_i)^2.$$
 Since we assumed that $Q$ is not a square, hence $Q=x_1 x_2$ or $Q=x_1 x_2 + x_3^2$,  for $i=3$
 we get, dividing by $Q$, the degree $8$ equation
  $$ B  G_3^2 =  QB_3^2.$$
Also the equations $f_{ij}$, for $j=3$, are, as just seen,  $(B Q) _i   G_3 -  Q B _3   G_i = 0 $.

If $G'$ is a common component of $G, f_{ij}$, and has degree $1$, then we get at most $ 1 \cdot  8 + 2 \cdot 6=20$
points, if $G'$ has degree $2$, we get at most $2 \cdot  8 + 1 \cdot 5=21$ points, and $X$ has at most $22$
singular points.

Finally, if  $G$ divides $f_{ij}$ for all $i,j$,  we get the estimate $ \leq 1 + 24 = 25$;
because if we had a curve $C$ contained in $G=0$ where $ B  G_3^2 +  QB_3^2=0$,
over the points of $C$ we find $z$ such that $ z G_3 + B_3=0, z^2 Q + B=0$; in view of the vanishing
of all $f_{ij}$'s we would obtain infinitely many singular points.

{\bf End of the proof of I) plus proof of  II).}

If $Q = x_1^2$, then $Sing (X)$ is the set
$$   \{ G(x) =  z^2 x_1^2  + B(x) =  z G_i  (x) + B_i (x) =0, \ 1 \leq i \leq 3\},$$
and  the last three equations imply 
$$\phi_{ij} : = G_i B_j - G_j B_i =   0 , i,j =1,2,3.$$

0) Assume now that $x_1 $ does not divide $G$.

This time the scheme $\Sigma$ consists of at most $3$ points, 
and over the points of $\Sigma$ lies at most one point of $ Sing(X)$: this is clear if $\nabla G \neq 0$
at them, otherwise there is none since $\nabla B \neq 0$ because of the fact that $ Sing(X)$ is finite.

As in case I) we conclude that if the scheme $$\hat{\sB }: = \{ x| G(x) = \phi_{ij}(x) = 0, i,j =1,2,3\}$$
is finite, it has length at most $3 \cdot 5 = 15$, hence $ | Sing(X)| \leq 16$.

If instead $\hat{\sB }$ is infinite, there is a GCD $G'$ of $G(x) ,  \phi_{ij}(x), {\rm for}\  i,j =1,2,3 $,
of degree $\geq 1$. 

This GCD $G'$ cannot be divisible by $x_1$, hence we claim that the set $M' : = \hat{\sB } \cap \{ B G_i^2 - B_i^2 x_1^2=0, \ i=1,2,3\}$
is finite.  In fact, for the points in the curve $G' =0$ , and with $x_1 \neq 0$, there exists a solution of the $4$ equations involving $z$, and since we cannot  have infinitely many singular points, $M' \cap \{ x_1 \neq 0\}$ is finite, and on the other hand $G = x_1 =0$
is finite. Hence $M'$  is finite.

Again, if  $G'$   has degree $1$, we get,
in view of the degree $8$ equations $ B G_i^2 = B_i^2 x_1^2$,
at most $ 1 \cdot  8 + 2 \cdot 4=16$ points, and if $G'$ has degree $2$, we get 
at most $ 2 \cdot  8 + 1 \cdot 3=19$ points, hence  $ | Sing(X)| \leq 20$.

In the case  that $G$ divides $\phi_{ij}$ for all $i,j$, we get instead   the estimate
 $ \leq 1 + 24 = 25$. 
  
 \bigskip

i) Assume  now that $x_1$ divides $G$, so that $ G = x_1 q$.

For the singular points with $x_1=0$, we get $B=0$, hence at most $4$ points since $x_1$ does not divide $B$.

For the singular points with $q=0$, and $x_1 \neq 0$, since $G_1 = q + x_1 q_1, G_2 =  x_1 q_2, G_3 =  x_1 q_3$,
we observe that 
$$ \{q = \phi_{ij} = 0\}  =  \{q= q_i B_j - q_j B_i = 0\},$$
hence we get at most $8$ points unless $q, q_i B_j - q_j B_i $ have a common component $g'=0$.

But in this case we can use, instead of the above degree $8$ equations, the degree $6$ equations 
$ B q_i^2 - B_i^2 = 0$, and get by the same argument as before
at most $12$ points (since we do not have infinitely many singular points).

Hence we get at most $ 1 + 4 + 12 = 17$ singular points in this case.

\bigskip

Assume now that
$$ ii) \  G(x) = x_1^2 L(x_1, x_2) + x_2 x_3 N (x_1 , x_2),$$
where $x_1$ does not divide $N$. In particular $G$ is irreducible unless $x_2 | L$, 
or $L, N$ are proportional; but the latter case reduces to the first by a change of variables ($y_2 : = N$).

Hence we may assume that either $G$ is irreducible or   $G$ is reducible   with  normal form 

$$ (ii-b) \  G(x) =   x_2 (L_2 x_1^2 +   x_3 N (x_1 , x_2)),$$
which amounts to $L_1=0$ (we have moreover $N_2 \neq0$).

We have in general
$$G_1 =  L_1 x_1^2 + N_1  x_2 x_3, G_2 = L_2  x_1^2 +  N_1 x_1 x_3, G_3 = x_2 N. $$
Then
$$ f_{12} = x_1^2 (L_1 B_2 + L_2 B_1) + x_3 N_1 ( x_2 B_2 + x_1 B_1), \  f_{13} = B_1 x_2 N +   B_3 (  L_1x_1^2+N_1  x_2 x_3),$$
$$ \  f_{23} = B_2 x_2 N + B_3 ( L_2  x_1^2 +  N_1 x_1 x_3 ).$$

 $\sB : = \{ x| G(x) = f_{ij}(x) = 0\}$ consists then of at most $15$ points (and we have then $ |Sing(X) | \leq 16$) 
 unless all the $f_{ij}$ have a common factor with $G$, so they are 
 divisible by $G$ if $G$ is irreducible. 
 
 Assume therefore that $G$ is irreducible and assume that $G$ divides all the $f_{ij}$'s.
 
 Consider now the equations $z^2 x_1^2 = B , z G_i = B_i$, which for $i=2$
 yields $ z x_1 (L_2x_1 + N_1 x_3) = B_2$. Then we obtain 
 $$f: =  B (L_2x_1 + N_1 x_3)^2 + B_2^2 = 0,$$
 an equation of degree $6$ which on $\{G=0\}$ determines at most $18$ points unless it vanishes identically
 on $\{G=0\}$. If $L_2 \neq 0,$ or $N_1 \neq 0$, then at the general point of $\{G=0\}$ $G_2$ does not vanish,
 and setting $ z = \frac{B_2}{G_2}$ equations $f=0, f_{ij}=0$ ensure that we get that over each general point of $\{G=0\}$
 lies a singular point of $X$, a contradiction.
 
 If $L_2 = N_1=0$ the equation of $G$ reduces to $L_1 x_1^3 + N_2 x_2^2 x_3$,
 which is irreducible if and only if $L_1 N_2 \neq0$, and in this case we may assume 
 $ G = x_1^3 +  x_2^2 x_3$. Hence $G_2=0, G_1 = x_1^2, G_3 = x_2^2$.
 
 If $G$ divides the $f_{ij}$, this means that $B_2, x_1^2 B_3 + x_2^2 B_1$ vanish on $\{G=0\}$.
 
 Because of the equation $ z x_1^2 = B_1$ we see that for the singular points the equation
 $$ B_1^2 + x_1^2 B=0$$
 is satisfied.
 
 This equations vanishes over at most $18$ points of $\{G=0\}$, and in this case we are done,
 or it vanishes identically: but then over the general point of $\{G=0\}$ we set $ z : = \frac{B_1}{x_1^2}$
 and we have found a singular point of $X$, because then all other equations $B = x_1^2 z^2 , B_i = z G_i$
 are satisfied: hence a contradiction.

 \begin{rem}
 Here is another  argument for the case where $G$ is irreducible:
 
If  $G$ is irreducible  
 we can eliminate  $x_3$ using $G=0$, and obtain that if $G$ divides these equations, then $G$ divides also

  $$  x_2 N (L_1 B_2 + L_2 B_1) + L N_1 ( x_2 B_2 + x_1 B_1),$$
 $$ B_1 x_2 N^2 + x_1 ^2 B_3 (L_1  N + N_1  L),$$
  $$ B_2 x_2^2  N^2 + x_1 ^2 B_3 (L_2  Nx_2 + N_1 x_1  L).$$

 Consider now the equations $B G_i^2 = B_i^2 x_1^2$. For $i=3$, we get 
 
 $$ B x_2^2 N^2 = B_3^2 x_1^2 \Rightarrow x_2 N^2 ( B x_2 (L_1  N + N_1  L) + B_3 B_1 ) = 0. $$
 Since $G$ is irreducible, then  $\{G =  ( B x_2 (L_1  N + N_1  L) + B_3 B_1 ) = 0\}$ is a finite set $\sM$,
 with at most $18$ points (otherwise, over  the infinitely many points with $ G=0, x_2 N \neq 0$ there lies a singular point with 
 $ z = \frac{B_2}{x_2 N}$). 
 
 This finite set  $\sM$ contains the points images of the singular points, since if 
 we have a singular point with $G_3 = x_2 N = 0$, then $B_3=0$, and 
 
 1) the points with $x_2 = G = B_3=0$ are in $\sM$
 
 2) the points with  $N=G = B_3= 0$ are also in $\sM$, because   then  $x_1L=0$; if $L=0$ we get a
 point of $\sM$, if instead  $N= x_1 = 0$,
  since $x_1$ does not divide $N$, $N= x_1 = 0 \Rightarrow x_1 = x_2 = 0 \Rightarrow L =0$ and we are done.
\end{rem}
 \bigskip
 
 {\bf The case (ii-b)}
$$ (ii-b) \  G(x) =   x_2 (L_2 x_1^2 +   x_3 N (x_1 , x_2)),$$
we note that $G_2$ vanishes only over a finite set of points of $\{G=0\}$, unless $L_2=0$.  If $L_2=0$,
$G$ is the union of three lines 

$$ (ii-c) \ G = x_2 x_3 ( x_2 + N_1 x_1).$$

In the former case where $G_2$ vanishes only over a finite set of points of $\{G=0\}$, 
we have $ z = \frac{B_2}{G_2}$ at outside a finite number of points.

If   equation
 $$f: =  B (L_2x_1 + N_1 x_3)^2 + B_2^2 = 0,$$
 which is verified for the singular points,  
does not vanish on any component of $\{G=0\}$, we get at most $18$ points.

If $q = GCD (G, f_{ij})$, then over $\{q=0\}$ we get at most $6 \cdot deg(q)$ points, 
since if $f$ would vanish identically we would have infinitely many singular points for $X$.

In the whole, we get $$ |Sing(X)| \leq 1 + 6 \cdot deg (q)  + 5 \cdot ( 3 -  deg (q)) \leq 19.$$

In the latter case of three lines, 
$$ (ii-c) \ G = x_2 x_3 ( x_2 + N_1 x_1) \Rightarrow  G_1 = N_1 x_2 x_3, \ G_2 = 0, \ G_3 = x_2  ( x_2 + N_1 x_1)=:  x_2 N.$$

Hence $\nabla G $ vanishes identically on $x_2=0$, therefore over this line we get at most $3$ singular points
unless $B_1, B_2, B_3$ are divisible by $x_2$. In this case setting $ z^2 := \frac{B}{x_1^2}$ we get the contradiction
that $X$ has infinitely many singular points.

Over the line $x_3 =0$ $G_1, G_2$ vanish and  $G_3$ does not vanish identically,
similarly on the line $N=0$ $G_2, G_3$ vanish and $G_1$ does not vanish identically, unless this line is again the line 
$x_2=0$, which we already took into consideration.

Over the line $x_3 =0$   we have at most $3$ points unless $B_1, B_2$ vanish identically.

Setting $ z = \frac{B_3}{G_3}$, we see that we have infinitely many singular points if the equation 
$h: = x_1^2 B_3^2 = B G_3^2$ vanishes identically. Otherwise, we would get at most $8$ points with multiplicity,
an  inequality which is unsatisfactory; however, 
writing  $B = x_3 B' + b ( x_1, x_2)$,  the condition that $x_3 | B_1, B_2$ implies that 
$ b ( x_1, x_2)$ is a square, $ b ( x_1, x_2) = \be(x_1, x_2)^2$.

Then for the singular points lying above the line $\{ x_3=0\}$ the equation $x_1^2 z^2 = B $
boils down to  $x_1 z = \be $, and together with the other equation $ z G_3 = B_3$
implies the equation $ h' = x_1 B_3 + \be G_3 =0$. Now we have replaced the  equation 
$h$ of degree $8$ by an equation $h'$ of degree $4$, hence we have at most $4$ points over the line 
$\{ x_3=0\}$.

Similarly for the third line. 

\medskip

Summing up, $ | Sing(X)| \leq 1 + 3 + 4 + 4  = 12$.

\medskip

 \begin{rem} 
 Here is another argument for the case (ii) where $G$ is reducible.
 
 In the case (ii-b) we have $ L_2 = 1$, $L_1 = 0$, and we are done as before if $x_2$ does not divide the
 $ GCD (G, f_{ij})$: since we get either $1 + 15$ singular points, or $1 + 5 + 2\cdot 6 = 18$ points.
 
 Remains  the possibility that  $x_2  | GCD (G, f_{ij})$.
 This implies that $x_2 | B_3$ (since $x_2 | f_{23}$), and then $x_2 | B_1$ (since $x_2 | f_{12} = x_1^2 B_1 + x_3^2 N_1 B_3$).

  We get at most $10$ points lying above $G/x_2 = 0$ if   
 $f_{ij} = G/x_2 = 0, \forall i,j$ is a finite set, else we get at most $12$ points from the equations $ G/x_2 =  ( B x_2 (L_1  N + N_1  L) + B_3 B_1 ) =0$,
 since  $ G/x_2 = x_1^2 + x_3 N$ is relatively prime with $x_2 N$.

 Write $B = x_2 B' + \be( x_1, x_3)$: since $x_2$  divides  $B_1, B_3$ we infer that $\be$ is a square $\be = q (x_1, x_3)^2$.

 Hence $B_2 = B' + x_2 B'_2$.

  Since  $x_2$  divides  $B_1, B_3, G_1, G_3$,
 from the equation 
 $ B G_2^2 + x_1^2 B_2^2 = x_2=0$ we get 
 $$x_2 =  x_1^2 (B'^2 + (x_1+ N_1 x_3)^2  q^2 )= 0 \Leftrightarrow x_2 =  x_1 (B' + (x_1+ N_1 x_3) q )= 0.$$

 These are at most $4$ points, unless  $(B' + (x_1+ N_1 x_3) q )$ is also  divisible by $x_2$,
 this means that $ B = x_2   (x_1+ N_1 x_3) q + q^2 + x_2^2 h(x)$.

 \bigskip
 
 However in this case $F = x_1^2 z^2 +  x_2 (z G' (x) + B'(x)) + q^2$
 shows that all partial derivatives vanish automatically  for $x_2=0$, 
 except $\frac{\partial F}{\partial x_2} = zG' + B'  + x_2 B'_2$.
 
 Hence in the plane $x_2=0$ the singular set is defined by
 $$zG' + q (x_1+ N_1 x_3) = x_1^2 z^2 +   q^2 =0 \Leftrightarrow zG' + q (x_1+ N_1 x_3) = x_1 z +   q =0.$$
 Then we get $ z (G' + x_1 (x_1+ N_1 x_3))= x_1 z +   q =0$. 
 
 However $(G' + x_1 (x_1+ N_1 x_3)) = x_3 N_2 x_2$,
 hence the singular set contains $\{ x_2 = x_1 z + q=0\}$ which is an infinite set, a contradiction.

 Hence we have a total of at most $ 1 + 17  =18$ singular points in case (ii-b).

 We conclude that in case ii) there are at most $19$ singular points.
  \end{rem}

\bigskip

{\bf  Case (iii).}
  
 In this case
 $$ (iii)  \  G(x) = x_1^2 L(x_1, x_2,x_3) +  x_1 x_2  M(x_2, x_3) +  x_2^3$$
 we have $G_3 =  x_1 ( L_3 x_1 + M_3 x_2),$ $ G_1 = L_1 x_1^2  + M_2 x_2 ^2, \ G_2 = L_2 x_1^2 + M_3 x_1 x_3 + x_2 ^2.$
  
  If $x_1 = 0$ for a singular point,    then $G= 0 \Rightarrow x_2=0$, and $\nabla G$ vanishes at the point $x_1=x_2=0$,
  hence either there is no solution $z$, or each $z$ is a solution, a contradiction. Hence we have shown that for 
  the singular points $x_1 \neq0$. 
  
  As before,   if the scheme $$\hat{\sB }: = \{ x| G(x) = \phi_{ij}(x) = 0, i,j =1,2,3\}$$
is finite, it has length at most $3 \cdot 5 = 15$, hence $ | Sing(X)| \leq 16$.

Assume therefore that this scheme is infinite, and contains a subdivisor  $\{q=0\} $ of $\{G=0\} $.

Then from the equation $ z G_3 = B_3$ we get 
$$ z^2  x_1^2 ( L_3 x_1 + M_3 x_2)^2 = B_3^2 \Rightarrow  B ( L_3 x_1 + M_3 x_2)^2 + B_3^2=0.$$
This is a degree $6$ equation, and if it intersects $\{q=0\} $ in a finite set, this set has at most $ deg(q) \cdot 6$ points, 
 we have in this case 
 $$|Sing(X) | \leq 1 + deg(q) \cdot 6 + (3 - deg(q))  \cdot 5 \leq 19.$$ 
 
 Again, if this set contains a curve $C$, over the general points of the curve $C$, provided $G_3 \neq 0$, 
 setting $ z := \frac{B_3}{G_3}$ the equation $ x_1^2 z^2 = B$ is also satisfied, and 
 yields a value of $z$ corresponding to a singular point,  because  all the $\phi_{ij}$ vanish on $C$.
 
Since we already saw that $x_1 \neq 0$, we are done in the case:

a) $M_3 =0, L_3 \neq 0$.

There remain the possibilities that 

b) $M_3 \neq 0$, $C$ equals $ \Lambda : = \{( L_3 x_1 + M_3 x_2)=0\}$,
or that 

c) $L_3 = M_3 = 0$.

{\bf Case  b):} here  we can change variables and set first  $M : = y_3$, then we reach the  form 
$$ (iii-b)  \  G(x) = x_1^2 L(x_1,  x_3 ) + x_1 x_2 x_3 + x_2 ^3,$$
 by taking as new $x_3$ variable $ (y_3 + L_2 x_1)$; that is,  we can assume  $L_2=0$, $M= x_3$.

In case b)  $G_1, G_2$ are relatively prime; hence, since   $G_3$
vanishes on  $ \Lambda$, so must also by our assumption $ G_1 B_3, G_2 B_3$, hence $B_3$ vanishes on  $ \Lambda$.

Now $G_1 = L_1 x_1^2$, and if $L_1 \neq 0$, then on the points of $ \Lambda$ setting $ z = \frac{B_1}{L_1 x_1^2}$
we find a singular point (since $B_3, G_3, \Phi_{12}$ vanish), once the following equation
$ L_1^2 B x_1^2 + B_1^2 =0$ is satisfied. Hence this equation is satisfied in at most $6$ points of $ \Lambda$,
and we get again the estimate $ |Sing(X)| \leq 19$.

If instead $L_1 = 0$, we find a contradiction, since $\ell : = L_3 x_1 + x_2$ 
does not divide $G = \ell x_1 x_3  + x_2^3$ unless $L_3=0$.

Finally, if $L_1 = L_3=0$, then $G = x_2 (x_1 x_3 + x_2^2)$, and $\Lambda = \{ x_2 =0\}$,
and it suffices to show that over this line we have at most $6$ points.

Over a point of   $\Lambda = \{ x_2 =0\}$ $G_1 , G_3$ vanish hence we find a singular point only if
$B_1, B_3$ vanish: these are at most $3$ points unless $ x_2 | B_1, B_3$. But if this happens,
setting $ z = \frac{B_2}{G_2} = \frac{B_2}{x_1 x_3}$, we find a singular point only if the 
degree $6$ equation $ B x_3^2 + B_2^2=0$ is satisfied. This equation is then manifestly satisfied
in at most $6$ points; indeed, it is satisfied in   at most $3$ points, since  $ x_2 | B_1, B_3$ implies that
$ B(x) = x_3 B' + \be (x_1, x_3)^2$.

{\bf Case  c):}  $L_3= M_3 = 0$, and $G$ is a product of $3$ linear forms in the variables $x_1, x_2$:
 therefore changing variables we can write 

$$ (iii-c)  \  G(x) = x_2 (x_2 + a x_1 ) (x_2 + b x_1).$$

We shall now 
show that we get the upper bound  $19$ also in this case.

Indeed in this case 
$$G_3 = 0, \ G_1 = (a+b) x_2^2, \ G_2 = x_2^2 + ab x_1^2.$$

If $G$ consists of three distinct  lines, it suffices to give an upper bound for the number of singular points lying above one of the three lines, 
without loss of generality it suffices to consider the line $ \Lambda = \{x_2=0\}$.

Since $ G_1 = G_3 =0$ on $ \Lambda$, we have singular points only over the points where $B_1= B_3=0$.

These are at most $3$ points unless $ x_2 $ divides $B_1, B_3$. 

In the latter case $B = x_2 B' + \be(x_1, x_3)^2$ and  on the points of $ \Lambda$  we find 
a singular point  setting $ z = \frac{B_2}{G_2} =  \frac{B_2}{ab x_1^2}$ (observe that   $ab \neq 0$) 
if the  equations 
$$ x_2= (ab)^2 x_1^2 B + B_2^2 =0 \Leftrightarrow x_2= x_1 \be + B_2 = 0 $$ 
are  verified. Hence the solutions are at most $3$ points.

If instead we have a line  $ \Lambda$ of multiplicity at least $2$, then, on the points of $ \Lambda$, we have the vanishing
 $\nabla G =0$, 
hence we have a singular point only over the points where 
$B_1, B_2, B_3$ vanish: these are at most $3$ points unless $B_1, B_2, B_3$ vanish identically.

In this last case, setting $ z^2 = \frac{B} {x_1^2}$, we find that there are infinitely many singular points lying over $ \Lambda$,
a contradiction.

 \end{proof}

 \subsection{The case where the minimal resolution of $X$ is a K3 surface (the singularities are Rational Double Points)}
 The  general estimate $\mu(4) \leq 25$ of (I) of proposition \ref{separable} is by far   not  satisfactory.

 Our strategy  to improve this estimate (developed  after  a conversation with Stephen Coughlan)
  is based on the following remark, 
 which   shows that we  get a better inequality  in the case where 
  we can show that  the minimal resolution $S$ of the  quartic $X$ with double points is  a K3 surface.
  
  Our  strategy is to first give some characterization of the cases where  the minimal resolution $S$ of the  
  quartic $X$ with double points is  not a K3 surface, and then use the previous estimates considered in 
  cases i),  ii) and iii) of Proposition \ref{separable}.
 
 \begin{rem}\label{K3}
 Let $f : S \ra X$ be the minimal resolution of a quartic surface with only double points $p_1, \dots, p_k$ as singularities.
 
 Then the inverse image of each $p_i$ is a union of irreducible curves $E_{i,1}, ,\dots, E_{i, r_i}$
 and if $ r : = \sum_1^k r_j \geq k$, we have irreducible curves $E_1, \dots, E_r$ such that the intersection matrix 
 $\langle E_i , E_j\rangle$ is negative definite \cite{mumford}. 
 
 Hence it follows that the Picard number $\rho(S) \geq r+1 \geq k+1$, keeping in consideration that the hyperplane section $H$ is orthogonal to the $E_i$'s.
 
 Then, using $\ell$-adic cohomology, we obtain a lower bound for the second Betti number $ b_2(S) \geq \rho(S) \geq k+1$, see for instance
 \cite{milne} cor. 3.28 page 216.
 
 If $S$ is a minimal K3 surface, we  have $\chi(S) = 2, b_2(S) = 22$, hence in this case we get soon the estimate
 $ k = | Sing(X)| \leq 21.$
 
 Now, it follows that $S$ is a minimal K3 surface if the singular points are rational singularities (this means that  $\sR^1f_* (\hol_S)=0$): 
  because
  these are then rational double points (see \cite{artin}) and $K_S$ is a trivial divisor. However this does not need to hold
  in general. 
 
 If instead the singular points are not rational, it follows that $K_S$ is the opposite of an effective 
 exceptional divisor and  $h^1(\hol_S) >0$.  $ K_S^2$ is    negative, and $\chi(S)$ nonpositive; if both are negative ($h^1(\hol_S) \geq 2$) by Castelnuovo's theorem $S$ is ruled and
 possibly non minimal. Hence in this case we do not have an explicit upper bound for $b_2(S) = - K_S^2 + 12 \chi(S)$.
 \end{rem}

In order to see whether the minimal resolution $S$ is a K3 surface, we observe that $A_1$-singularities,
that is, double points $P$ for which 
the projectivized tangent conic  $Q$ is smooth, are such that the blow up of $P$ yields a resolution, containing a $(-2)$-curve as exceptional divisor.

If instead the conic $Q$ consist of two lines, $x_1 x_2=0$ (one says that $P$ is a biplanar double point), 
blowing up the point $P$ we obtain $X_1 \ra X$ 
with two generically reduced exceptional curves, meeting in one point $P_1$ which is the only possible singular point lying over $P$.

From the affine Taylor expansion of $F$ at $P$:
$$ (Taylor^a) \ F (x_1, x_2,x_3) = x_1 x_2  +   G (x) + B (x) ,$$

we derive that the local equation of $X_1$ at $P_1 = \{ u_1=u_2=x_3=0\}$ is: 
$$  \ F' (u_1, u_2,x_3) = u_1 u_2  +   x_3 G (u_1, u_2,1) + x_3^2 B (u_1, u_2,1) .$$
$P_1$ is smooth if and only if  $G$ contains the monomial $x_3^3$ with nonzero coefficient,
and otherwise $P_1$ is an $A_1$-singularity, and the blow up of $X_1$ is a local resolution of $X$, if   $B$ contains the monomial $x_3^4$ with nonzero coefficient.

Therefore we infer that $P$ is  a rational double point of one of the  types $ A_2, A_3$ unless $G,B$ belong to the ideal
$(x_1, x_2)$.

In the contrary case, $P_1$ is again a biplanar double point.

Using the next Lemma we find that the minimal resolution of these points is always a 
rational double point of   type $A_n$, for some $n$.

\begin{lemma}\label{A_n}
Let $P \in X = \{ F(x_1, x_2, x_3) = 0\}$ be a biplanar double point of an affine surface (that is, 
with tangent cone union of two different planes): then 

i)  its blow up is either smooth, or it contains a unique singular point, which is then
either again  a biplanar double point, or is an $A_1$-singularity (a double point with projectively smooth tangent cone).

ii) The minimal resolution is  of type $A_n$ with $n 
\geq 2$, that is, the exceptional divisor is a chain of $n$
curves $E_i \cong \PP^1$, with $ E_i^2 = -2$.
\end{lemma}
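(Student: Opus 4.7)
For (i), I would set up affine coordinates with $P$ at the origin and Taylor expansion $F = x_1 x_2 + H_{\ge 3}(x_1,x_2,x_3)$, then inspect the three standard affine charts of the blow-up of $\mathbb A^3$ at the origin. In the charts with blow-up parameter $x_1$ or $x_2$, the proper transform inherits a nonvanishing linear term from the monomial $x_1 x_2$, so it is smooth throughout. Any singular point of the blow-up therefore lies in the third chart, with coordinates $(u_1,u_2,x_3)$ and local equation
\[
F' \;=\; u_1 u_2 \;+\; x_3\,H_3(u_1,u_2,1) \;+\; x_3^{2}\,H_4(u_1,u_2,1) \;+\; \cdots ,
\]
and inspection of $\partial_{u_i} F'$ on $\{x_3=0\}$ pins the only candidate singular point down to $P_1=(0,0,0)$, the intersection of the two exceptional lines.

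The classification of the singularity at $P_1$ then reduces to reading its tangent cone off $F'$. If the coefficient of $x_3^3$ in $H_3$ is nonzero, a linear term appears at $P_1$ and the blow-up is smooth there. Otherwise the quadratic part at $P_1$ equals $u_1 u_2 + a_1 u_1 x_3 + a_2 u_2 x_3 + c\,x_3^2$, where $a_i$ is the coefficient of $x_i x_3^2$ in $H_3$ and $c$ is the coefficient of $x_3^4$ in $H_4$. Completing the square in characteristic $2$ by setting $u_i' := u_i + a_{3-i} x_3$ rewrites this as $u_1' u_2' + (c+a_1 a_2)x_3^2$, and matching against the three characteristic-$2$ normal forms for ternary conics from the preliminaries isolates exactly the two remaining possibilities for $P_1$: biplanar if $c+a_1 a_2 = 0$, and projectively smooth (hence $A_1$) otherwise; the double-line case $y_1^2$ is excluded because the $u_1'u_2'$ term has rank two. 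This proves (i).

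For (ii) I would iterate. Part (i) shows that each blow-up of a biplanar point either resolves, produces an $A_1$ (resolved by one further blow-up), or yields another biplanar point, and since the minimal resolution of a normal surface singularity is finite, the process terminates. The combinatorial claim that I expect to be the main obstacle is that the total exceptional divisor is always a chain of smooth rational curves meeting transversally. After the first blow-up, $E_1$ and $E_2$ meet at $P_1$ with tangent directions lying on different irreducible components of the biplanar tangent cone at $P_1$; hence, when $P_1$ is biplanar and we blow it up, the strict transforms $E_1',E_2'$ attach to different components of the new exceptional pair at points distinct from their mutual intersection $P_2$, so $E_1'$ and $E_2'$ flank the new pair without meeting each other or $P_2$. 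In the $A_1$ case a single new $\mathbb P^1$ is inserted between $E_1$ and $E_2$. Iterating preserves the chain structure and lengthens it by one or two components per blow-up.

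Finally, to identify the self-intersections as $-2$, I would invoke Artin's criterion: the total exceptional divisor is a connected chain of smooth rational curves, so its fundamental cycle has arithmetic genus zero and the singularity is rational. The minimal resolution is therefore crepant, and adjunction on each $E_i \cong \mathbb P^1$ gives $K \cdot E_i = 0$ and $E_i^2 = -2$. With $n \geq 2$ coming from the initial pair $E_1,E_2$, this identifies the minimal resolution as a chain $A_n$, $n \geq 2$.
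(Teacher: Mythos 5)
Your part (i) is essentially the paper's own argument: the same blow-up chart computation, the same smoothness criterion (the coefficient of $x_3^3$ in the cubic part), and the same observation that the quadratic part at the new point $P_1$ retains the rank-two term $u_1u_2$, so $P_1$ can only be biplanar or an $A_1$. You are in fact slightly more careful than the paper: your completed-square criterion $c+a_1a_2\neq 0$ correctly accounts for the cross terms $u_ix_3$ (the paper states the $A_1$ condition as $B(0,0,1)\neq 0$, which ignores $a_1a_2$ — harmless for the trichotomy, but your version is the precise one), and you actually check in the other two charts that $P_1$ is the only possible singular point, which the paper only asserts. For part (ii) the paper says merely that it ``follows easily by induction''; you supply the induction: the chain structure (strict transforms of the old exceptional curves attach to different components of the new tangent cone, away from the new singular point), termination, and the computation of the self-intersections. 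Two small caveats on that last portion, neither fatal. First, termination does not formally follow from the mere existence of a finite minimal resolution; one needs that the minimal resolution dominates each successive point blow-up, or an invariant that strictly drops, or a citation of the theorem that iterated (normalized) point blow-ups resolve surface singularities — normalization being automatic here since each stage is a hypersurface with isolated singularities. Second, ``rational, therefore crepant'' is compressed: rationality alone does not give crepancy (a chain with a $(-3)$-curve is rational), so you must use the multiplicity-two hypothesis (rational double point, hence Du Val/canonical); alternatively, and more simply, each blow-up of a double point of a hypersurface in a smooth threefold is crepant by adjunction, $K_{\widetilde X}=\pi^*K_X+(2-m)E$ with $m=2$, which gives $K\cdot E_i=0$ directly and lets you bypass Artin's rationality criterion altogether.
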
 

 \begin{proof}
 From the affine Taylor expansion of $F$ at $P = \{ x=0\}$
$$ (Taylor^a) \ F (x_1, x_2,x_3) = x_1 x_2  +   G (x) + B (x) ,$$
where $G$ is homogeneous of degree $3$ and $B$ has order at least $4$,
exactly as before 
we derive that the local equation of the blow up $X_1$ at $P_1 = \{ u_1=u_2=x_3=0\}$ is: 
$$  \ F' (u_1, u_2,x_3) = u_1 u_2  +   x_3 G (u_1, u_2,1) + x_3^2 B (u_1, u_2,1) .$$

$P_1$ is smooth if and only if  $G$ contains the monomial $x_3^3$ with nonzero coefficient,
equivalently $G (0, 0,1) \neq 0$, 
else we have a double point with quadratic part $Q_1$ of the Taylor development equal to 
$$ Q_1 = u_1 u_2  +   x_3 G_1 (u_1, u_2,1) + x_3^2 B (0,0,1) .$$

Either $Q_1$ is the product of two linearly independent linear  forms,
or $Q_1$ is a smooth conic, and this if and only if  $B (0,0,1)  \neq 0$, equivalently,
  $B$ contains the monomial $x_3^4$ with nonzero coefficient.

i) is then proven and ii) follows easily by induction.

 \end{proof}

If instead we have a uniplanar double point, that is, an affine Taylor expansion of $F$ at $P$:
$$ \ F (x_1, x_2,x_3) = x_1 ^2   +   G (x) + B (x) ,$$
the blow up $X_1$ has a neighbourhood of the exceptional divisor 
contained in the union of two  affine pieces, one  with  equation 
$$  \ F' (u_1, u_2,x_3) = u_1 ^2  +   x_3 G (u_1, u_2,1) + x_3^2 B (u_1, u_2,1) ,$$

and another with equation
$$  \ F^*(u_1, x_2,u_3) = u_1 ^2  +   x_2 G (u_1, 1, u_3) + x_2^2 B (u_1,1,  u_3) .$$

We have that, in the first affine piece,  the intersection of the singular set of $X_1$ with the exceptional divisor $x_3=0$
equals 
$$ x_3 = u_1 = G (0, u_2,1)=0,$$
 respectively
$$ x_2 = u_1 = G (0, 1, u_3)=0$$
in the other afine piece.

Hence we  have a 1-dimensional singular set (then equal to $\{x_3= u_1=0\}$) for $X_1$
if and only if  

\bigskip

i) $x_1 | G$. 

\bigskip

Observe  that  for rational double points the resolution is obtained by a sequence of blow 
ups of (also infinitely near) points of multiplicity $2$. 

\bigskip

Let us consider the case where   $x_1$ does not divide $G$.  Then $G (0, u_2,1)$ is not identically zero and we get a singular point $P'$ with 
smooth tangent quadric for each solution of $ x_3 = u_1 = G (0, u_2,1)=0$ unless (possibly) if

\bigskip

ii) $G (0, u_2,1) = 0$ has a double root at $P'$, 

or

iii) $G (0, u_2,1) = 0$ has a triple root at $P'$.

Similarly for the other chart, so by looking at the roots of $G (0, u_2,u_3) = 0$ and possibly exchanging the roles of $x_2$ and $x_3$
we may reduce to the case where the unique root with multiplicity $\geq 2$ is in the first chart.

\bigskip
Changing variables both cases i) and ii) reduce to the condition  that $ G = x_1 q (x) + \la x_2^2 x_3$,  case 
iii) reduces to: $ G = x_1 q (x) +  x_2^3$.

\medskip

{\bf Example:}
The former case i)  ($\la = 0$) happens for instance for
$$X = \{ F = z^2 x_1^2 + z x_1^2 x_2 + x_1^3 x_2 + x_2^3 x_3 + x_3^3 x_1=0\},$$
whose only singular point is $P= \{x_1= x_2 = x_3 = 0\}$.

\bigskip

If $\la \neq 0$, we may assume without loss of generality that $\la = 1$, and at  the point $P' = \{x_3 = u_1 = u_2=0\}$
we have a  local equation 
$$  \ F' (u_1, u_2,x_3) = u_1 ^2  +   x_3  (u_1 q(u_1, u_2,1) + u_2^{2 + \de}) + x_3^2 B (u_1, u_2,1) ,$$
with $\de = 0 $ in case ii), $\de = 1 $ in case iii).

W get a point with  tangent cone  union of two distinct planes unless $q(0, 0 ,1)= 0$,
which means that $q(x)$ is in the ideal $(x_1, x_2 )$,
and implies that  

ii) : $ G(x) = x_1^2 L(x) + x_1 x_2 M(x_2, x_3) + x_2^2 x_3,$

iii) : $ G(x) = x_1^2 L(x) + x_1 x_2 M(x_2, x_3) + x_2^3.$

\bigskip

 We are allowed to change variables, but keeping $x_1$ fixed.

Write now $M(x_2, x_3) = m_2 x_2 + m_3 x_3.$ Then in case ii):

 $$ G(x) = x_1^2 L(x) + m_2 x_1 x_2^2 +  m_3 x_1 x_2 x_3 + x_2^2 x_3 =   x_1^2 L(x)  +  m_3 x_1 x_2 x_3 + x_2^2 (x_3+ m_2 x_1)$$
 and, setting $y_3 : = x_3+ m_2 x_1$,

$$ G(x) = x_1^2 L(x) + x_2^2 y_3 +  m_3 x_1 x_2 (y_3 + m_2 x_1) = x_1^2 L'(x_1, x_2, y_3) + x_2^2 y_3 + m_3 x_1 x_2 y_3.$$

Writing $  L'= a^2  y_3 + \hat{ L}(x_1, x_2)$,  we can rewrite 
$$G = y_3 [x_2^2 + a^2 x_1^2 + m_3 x_1 x_2] +   x_1^2 \hat{ L}(x_1, x_2) = y_3 y_2 \tilde{N} (x_1, y_2 ) +   x_1^2 \tilde{ L}(x_1, y_2)$$
(taking the roots of the quadratic polynomial  inside the square brackets).

Changing again variables we reach the simpler form 

$$ (ii)  \  G(x) = x_1^2 L(x_1, x_2) + x_2 x_3 N (x_1 , x_2).$$

In case iii),  we remain with  the form 
$$ (iii) \ G(x) = x_1^2 L(x) + x_1 x_2 M(x_2, x_3) + x_2^3.$$

\bigskip

We conclude recalling that   in the case  $ Q = x_1^2$ we can use II) of Proposition \ref{separable}, implying that
$ | Sing(X)| \leq 17$ in case i), $ | Sing(X)| \leq 19$ in cases ii),
 and  iii).

\bigskip

We reach then our final result

\begin{theo}\label{mt}
Let $X$ be  quartic surface $X = \{ F =0\} \subset \PP^3_K$, where $K$ is an algebraically closed field
of characteristic  $2$,
and suppose  that $ Sing (X)$ is a finite set. 

Then $ | Sing(X)| \leq 20$.

In fact:

(1) if $ | Sing(X)| \geq 20$, the only singularities of $X$ are rational double points and the minimal resolution
$S$ of $X$ is a minimal K3  surface; moreover $ | Sing(X)| \leq 21$.

(2) If it were $ | Sing(X)| =  21$, then the singularities would  be nodes (ordinary quadratic singularities), and $S$ 
would have  $\rho(S) = b_2(S) = 22$. This case does not exist.

(3) If $ | Sing(X)| = 20$,  then  the singularities are nodes (ordinary quadratic singularities), and $S$ 
is supersingular, this means that the  Picard number $\rho(S)=   b_2(S) = 22$.

\end{theo}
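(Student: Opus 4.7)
The plan is to use the propositions already proved to reduce the problem to the case where the minimal resolution $S$ of $X$ is a K3 surface, and then to exploit the discriminant-form theory of the Picard lattice of K3 surfaces in characteristic $2$.

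\textbf{Step 1: Reduction to rational double points, and the bound $\leq 21$.} Proposition \ref{monoid} already handles the case of a triple point. Assuming then that every singular point is a double point, I would first look for a double point $P$ whose projection $\pi_P$ is inseparable: if one exists, Proposition \ref{inseparable} gives $|\Sing(X)|\leq 16$. Otherwise, at each singular point $P$ the projectivized tangent conic $Q$ is either smooth (a node, hence $A_1$), a pair of distinct lines (biplanar, hence $A_n$ by Lemma \ref{A_n}), or a double line; in the last case the case analysis carried out just before the statement, combined with Proposition \ref{separable}(II), shows that either $P$ is itself a rational double point or $|\Sing(X)|\leq 19$ (cases (i), (ii), (iii)). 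Since all these fallback bounds are strictly below $20$, whenever $|\Sing(X)|\geq 20$ every singular point of $X$ is a rational double point. By Artin \cite{artin}, the minimal resolution $f\colon S\to X$ then satisfies $K_S\cong f^*K_X\cong\hol_S$, so $S$ is a minimal K3 surface with $b_2(S)=22$. Writing $r_i$ for the number of $(-2)$-curves above the $i$-th singular point, the hyperplane class $H$ (with $H^2=4$) is orthogonal to every exceptional curve, so together with them it spans a sublattice of $\Pic(S)$ of rank $1+\sum_i r_i$, yielding $\rho(S)\geq 1+|\Sing(X)|$ and in particular $|\Sing(X)|\leq 21$.

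\textbf{Step 2: Excluding $|\Sing(X)|=21$.} If $k:=|\Sing(X)|=21$, then $\sum r_i\leq\rho(S)-1\leq 21$ combined with $\sum r_i\geq k=21$ forces every $r_i=1$ (all singularities are nodes) and $\rho(S)=22$, so $S$ is a supersingular K3 surface. By the Artin--Shioda classification the absolute discriminant of $\Pic(S)$ equals $2^{2\sigma_0}$ for an Artin invariant $\sigma_0\in\{1,\dots,10\}$. The explicit sublattice $\langle H\rangle\oplus A_1^{21}\subset\Pic(S)$ has discriminant of absolute value $4\cdot 2^{21}=2^{23}$, so its index $m$ in $\Pic(S)$ would satisfy $m^2=2^{23-2\sigma_0}$, impossible because $23$ is odd.

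\textbf{Step 3: The case $k=20$.} Here $\sum r_i\in\{20,21\}$, so either all twenty singularities are nodes, or nineteen are nodes and the remaining one is an $A_2$-singularity. In the $A_2$ sub-case the rank-$22$ sublattice $\langle H\rangle\oplus A_1^{19}\oplus A_2$ forces $\rho(S)=22$ and has absolute discriminant $3\cdot 2^{21}$; the resulting index equation $m^2\cdot 2^{2\sigma_0}=3\cdot 2^{21}$ requires the prime $3$ to appear with even multiplicity, absurd. Hence all twenty singularities are nodes. Then $\rho(S)\geq 21$, and by the Artin dichotomy for K3 surfaces in positive characteristic ($\rho(S)\in\{1,\dots,20\}\cup\{22\}$) this upgrades to $\rho(S)=22=b_2(S)$, i.e.\ $S$ is supersingular, proving (3).

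\textbf{Main obstacle.} The genuine difficulty is the improvement from the soft K3 bound $|\Sing(X)|\leq 21$ to the sharper $|\Sing(X)|\leq 20$: this is achieved not by further projection arguments but by the discriminant-parity obstruction inside the Artin--Shioda lattice of a supersingular K3 surface in characteristic $2$, together with Artin's non-existence of K3 surfaces with $\rho=21$. Everything else is either a direct application of the earlier propositions or routine lattice bookkeeping.
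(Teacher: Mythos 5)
Your proposal is correct and follows essentially the same route as the paper: reduction via Propositions \ref{monoid}, \ref{inseparable}, \ref{separable} and Lemma \ref{A_n} to the case where all singularities are rational double points and $S$ is a K3 surface (giving $|Sing(X)|\leq 21$), then the Artin-invariant discriminant-parity argument to exclude $21$ nodes and the $A_2$ sub-case, and the non-existence of $\rho(S)=21$ to force supersingularity when there are $20$ nodes. The only cosmetic difference is that you invoke this last input as the ``Artin dichotomy'' while the paper spells out that it rests on Artin's observation together with Charles's proof of the Tate conjecture in characteristic $2$.
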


\begin{proof}
If $X$ has a point of multiplicity $4$, then $ | Sing(X)| =1$.

If $X$ has a point of multiplicity $3$, then by Proposition \ref{monoid} $ | Sing(X)| \leq 7$.

If $X$ has a point of multiplicity $2$ such that the projection with centre $P$ is inseparable, then $ | Sing(X)| \leq 16$ by Proposition \ref{inseparable}.

If $X$ has a point of multiplicity $2$ such that the projection with centre $P$ is separable, 
 the projective tangent conic at $P$ is a double line $Q = x_1^2$,
 either (by Lemma \ref{A_n} and the ensuing arguments) the minimal resolution of $P$ yields a rational double point,  or
i) or ii) hold and  $ | Sing(X)| \leq 19$ by Proposition \ref{separable}.

It follows that the minimal resolution $S$ of $X$
is a minimal K3 surface if $ | Sing(X)| \geq 20$.

Using remark \ref{K3}, we get    the estimate
 $ | Sing(X)| \leq  \rho(S) -1 \leq 21.$
 
(2)  If equality holds, then $ \rho(S) = 22$ and there are exactly $21$ irreducible exceptional curves $E_i$, with $E_i^2 = -2$,
 hence the singularities are nodes.
 
 As pointed out by a  referee,   the existence of  $21$ ordinary double points
and of a  divisor $H$ with $H^2=4$ which is orthogonal to the $21$ disjoint $(-2)$
curves leads to a contradiction.

 Because the discriminant of the rank $22$ Neron Severi lattice $\sN : = NS(S)$
has  absolute value  $  2^{2\s} $ \cite{artinSS} \cite{rudakov-shafarevich}, where $\s = \s(S)$ is the so-called Artin invariant,
while the  lattice $\sL$ spanned by $H$ and the $E_i$' s,
whose   index in  $\sN$  we denote $b$,  has  discriminant of  absolute value
$  2^{23} $.  We have a contradiction from $  2^{23} = b^2 2^{2m}$.

(3) Each singular point $P$ contributes $r(P)$ to the Picard number $\rho(S)$ if the local resolution
has $r(P)$ $(-2)$ curves.  Then  $\sum_P r(P) + 1 \leq \rho(S) \leq 22$.

Artin proved \cite{artinSS} that K3  surfaces with height of the formal Brauer group $h$ ($h \in \NN \setminus\{0\}  \cup \{ \infty \}$)
   satisfy $ \rho(S) \leq 22 - 2h$ if the formal Brauer group is $p$-divisible. 
   
   Artin observes that K3 surfaces $S$ with $ \rho(S) = 21$ do not exist, as he proves that 
   if $h = \infty$ and $S$ is elliptic, then the formal Brauer group is $p$-divisible,
   and moreover $S$ is elliptic once $\rho(S) \geq 5$ by Meyer's theorem (see \cite{serre}
   corollary 2, page 77).
   
   Artin predicted (modulo the conjecture that $h = \infty$ implies that $S$ is elliptic)
 that  $h = \infty \Leftrightarrow  \rho(S) = 22$.
 
 This equivalence follows from the Tate conjecture for K3 surfaces over finite fields, as explained in \cite{liedtke},
 discussion in section 4, especially theorem 4.8 and remark 4.9; the Tate conjecture was proven in $char=2$
 by Charles, Theorem 1.4 of \cite{charles}.

Hence  $\sum_P r(P) = 20$ or $21$. In the first case we have $20$ nodes, in the second case we would
have $19$ nodes and an $A_2$ singularity. 

In the second case  however the  lattice $\sL$ spanned by $H$ and the $E_i$' s has discriminant of  absolute value
$  2^{21} \cdot 3$, $S$ is a supersingular K3 surface, hence we get the usual contradiction 
$  2^{21} \cdot 3 = b^2 2^{2m}$.

\end{proof}

\begin{rem}
The K3  surfaces with  $\rho(S) = b_2(S) = 22$ are the so-called Shioda-supersingular K3 surfaces.

Shioda observes  \cite{shioda} that Kummer surfaces in characteristic $2$ have 
at most $4$ singular points, and proves that the Kummer surface associated to a product of elliptic
curves has $\rho(S) \leq 20$.

Rudakov and Shafarevich \cite{rudakov-shafarevich} described the supersingular K3 surfaces in characteristic $2$ according to their Artin invariant $\s$, which determines the intersection form on $Pic(S)$.

Dolgachev and Kondo \cite{dolga-kondo} constructed a supersingular K3 surface in characteristic $2$ with Artin invariant $1$
and with $21$ disjoint $-2$
curves, but  this surface does not  have  an embedding as a quartic surface with $21$ ordinary double points
(see also \cite{katsurakondo}), as we saw in the proof of theorem \ref{mt}.

In their case the orthogonal to the $21$ disjoint $-2$
curves is a divisor $H$ with $H^2=2$, yielding a realization as a double plane.

It seems unlikely that  a supersingular K3 surface can be birational to a nodal   quartic surface.

Matthias Sch\"utt proposed a strategy to exclude the case $\nu = 20$ (as we saw,  $S$ 
is then supersingular).

\end{rem}

\section{Symmetric Quartics}

One can try to see whether, as it happens for cubic surfaces or for quartics in characteristic $\neq 2$, one can construct quartics
with the maximum number of singular points using quartic surfaces admitting $\mathfrak S_4$-symmetry.

But the main result of this section is negative in this direction:

\begin{theo}\label{symmetric}
The special quartic surfaces admitting $\mathfrak S_4$-symmetry, with equation of the form

$$ F (x) : = F(a,x) : =  a_1 \s_1^4 + a_2 \s_1^2 \s_2 +  a_3 \s_1 \s_3 + a_4  \s_4  = 0,$$ 
(where $\s_i$ is as usual the $i$th elementary symmetric function)
have at least $6$  singular points (the  $\mathfrak S_4$-orbit of $(0,0,1,1)$) and either
  \begin{itemize}
  \item
  $6$ singular points, or
  \item
   $10$ singular points, the $4$ extra singular points being either the $\mathfrak S_4$-orbit of the point $(0,0,0,1)$, or the $\mathfrak S_4$-orbit of a point $(1,1,1,b)$, or
   \item
 infinitely many  singular points.
 \end{itemize}

\end{theo}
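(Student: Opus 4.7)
The plan is to combine an explicit characteristic-$2$ formula for the gradient of $F$ with the fact that $\Sing(X)$ is stable under the $\mathfrak S_4$-action, so it decomposes into $\mathfrak S_4$-orbits on $\PP^3_K$. The possible orbit sizes are $1, 4, 6, 12, 24$, so only finitely many orbit types need to be checked. First I would verify that the $6$-orbit of $(0,0,1,1)$ is always singular; then, for every other orbit type, I would compute the gradient at a representative and show that the singularity conditions either produce one of the two claimed $4$-orbits of extra points, or force $X$ to contain a positive-dimensional singular locus.

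Concretely, the chain rule combined with the $\mathrm{char}\,2$ identities $\partial_i(\sigma_1^4)=0$ and $\partial_i(\sigma_1^2\sigma_2)=\sigma_1^2\sigma_1^{(i)}$ (together with $\sigma_1 + x_i = \sigma_1^{(i)}$) gives
\[
\partial_i F \;=\; \sigma_1^{(i)}\bigl(a_2\sigma_1^2 + a_3\sigma_2^{(i)}\bigr) \;+\; (a_3+a_4)\,\sigma_3^{(i)},
\]
where $\sigma_k^{(i)}$ denotes the $k$th elementary symmetric function in $\{x_j : j\neq i\}$. At $P = (0,0,1,1)$ one has $\sigma_1=\sigma_3=\sigma_4=0$, so $F(P)=0$, and direct substitution shows all $\partial_i F$ vanish. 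The $\mathfrak S_4$-orbit of $P$ has $6$ elements, giving the base contribution.

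The remaining orbit types can then be eliminated one by one. At the fixed point $(1,1,1,1)$, direct evaluation gives $F = a_4$ and $\partial_i F = a_4$, so singularity forces $a_4 = 0$; but then $F = \sigma_1(a_1\sigma_1^3 + a_2\sigma_1\sigma_2 + a_3\sigma_3)$ is reducible and $\Sing(X)$ is a curve. At a generic $6$-orbit representative $(1,1,b,b)$ with $b \neq 0,1$, one finds $\sigma_1=\sigma_3=0$ and $\sigma_4=b^2$, so $F=a_4 b^2$; vanishing again forces $a_4=0$ and reducibility. For the $4$-orbit of $(0,0,0,1)$: $F=a_1$, $\partial_i F = a_2$ for $i\le 3$ and $\partial_4 F=0$, so one gets exactly $4$ extra singular points when $a_1=a_2=0$. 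For the $4$-orbit of $(1,1,1,b)$, setting $\tau := 1+b \neq 0$, the equations $\partial_4 F = a_2\tau^2 + a_4 = 0$ and $F = 0$ reduce after elimination to $\tau^2 = a_4/a_2$ and $a_1 a_4 = a_2(a_2+a_3)$, giving a unique $b$ and $4$ extra singular points. Finally the two $4$-orbit conditions are mutually exclusive: under $a_1=a_2=0$, the equation $\partial_4 F|_{(1,1,1,b)} = a_4$ would demand $a_4=0$ and again reducibility.

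The main obstacle is the $12$-orbit and $24$-orbit analysis. For a $12$-orbit representative $(a,b,1,1)$ with $s=a+b$ and $p=ab$, I plan to show that the gradient equations reduce, after using $\partial_1 F=\partial_2 F=0$, to $a_2 s^2 = a_4$, $F = s^2(a_1 s^2 + a_2+a_3)=0$, and $(s+1+p)(a_4 + a_3 s)=0$; the factor $(s+1+p) = (a+1)(b+1)$ forces a collapse to a $4$-orbit, while the factor $a_4 + a_3 s$ is independent of $p$, so its vanishing produces a one-parameter family of singular points, hence $|\Sing(X)|=\infty$. For the generic $24$-orbit I invoke Theorem \ref{mt}: any such orbit combined with the base $6$-orbit would produce at least $30$ singular points, violating the normality bound of $20$, so $X$ must fail to be normal and $\Sing(X)$ is infinite. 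The delicate step is the $12$-orbit computation: one must verify that no alternative arrangement of the four coordinates produces a $12$-orbit of isolated singular points, and that every apparent finite solution either collapses to a smaller orbit already handled, or expands via a free parameter into an infinite family.
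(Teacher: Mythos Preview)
Your orbit-by-orbit strategy is the same one the paper follows, and your gradient formula and the computations for the $1$-, $4$- and $6$-orbits agree with the paper's. Two points deserve comment.

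\textbf{A genuine gap in the $12$-orbit case.} Your representative $(a,b,1,1)$ tacitly assumes that the \emph{repeated} coordinate value is nonzero. A $12$-orbit whose repeated value is $0$, namely the orbit of $(0,0,1,c)$ with $c\neq 0,1$, is not of the form $(a,b,1,1)$ after any permutation or rescaling, and your reduction to $s,p$ does not cover it. You flag this at the end (``alternative arrangement'') but do not treat it. The missing computation is short: at $(0,0,1,c)$ one has $\sigma_3=\sigma_4=0$, and your own formula gives $\partial_4 F=a_2(1+c)^2$, forcing $a_2=0$; then $\partial_1 F=a_3 c(1+c)$ forces $a_3=0$; then $F=a_1(1+c)^4$ forces $a_1=0$, so $F=a_4\sigma_4$ is reducible and $\Sing(X)$ is a curve. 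This needs to be said explicitly.

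\textbf{Different route for all-nonzero coordinates.} For the $24$-orbit you invoke Theorem~\ref{mt}; this is logically valid (that theorem is proved earlier and does not depend on the present section), but it imports the heaviest result of the paper into what is meant to be an elementary computation. The paper instead makes one clean observation: for a singular point with every $x_i\neq 0$, multiplying $F_i$ by $x_i$ produces a cubic in the single variable $x_i$ whose coefficients are symmetric functions of all four coordinates, so all four $x_i$ are roots of the \emph{same} cubic. Hence the coordinates take at most three distinct values, which in one stroke rules out $24$-orbits and forces any $12$-orbit with nonzero entries into the $(b,c,1,1)$ pattern you analyse. This device is both shorter and self-contained; your approach reaches the same conclusion but at the cost of an external dependence.
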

 
 We shall prove the theorem through a sequence of auxiliary and more precise results.

Let the  quartic $X$ have an equation of the form

$$ F (x) : = F(a,x) : =  a_1 \s_1^4 + a_2 \s_1^2 \s_2 +  a_3 \s_1 \s_3 + a_4  \s_4  = 0:$$ 
since we want $X$ not to be reducible, we may soon assume that $a_4 \neq 0$.

A calculation of the partial derivatives yields:
$$ F_i : = \frac{\partial F}{\partial x_i} = a_2 \s_1^2 ( \s_1 + x_i) + a_3 (  \s_3 + \s_1 (\s_2 + \s_1 x_i + x_i^2)) + a_4  \frac{\s_4}{x_i}.$$

The singular set of $X$ is the set 
$$Sing(X) : = \{ x | F(x) = F_i (x) =0 , \ \forall 
1 \leq i \leq 4\},$$
which is clearly a union of $\mathfrak S_4$ orbits.

\begin{rem}\label{sing}
If a singular point of $X$ has two coordinates equal to zero,  say $x_1=x_2=0$, then $a_4  \frac{\s_4}{x_i}$ vanishes for all $i$,
and for $i=1,2$  we get that  the equation  $ a_2 \s_1^3  +  a_3 (  \s_3 + \s_1 \s_2)=0$ must be satisfied by the singular point.

Once this equation is satisfied, then for $i=3,4$ 
$$F_i=0 \Leftrightarrow a_2 \s_1^2  x_i  + a_3 (   \s_1 ( \s_1 x_i + x_i^2)) =0,$$
which vanishes exactly for $\s_1=0$, or for  $x_i=0$, or for 
$$   a_3 x_i + ( a_3 + a_2)    \s_1 =0 \Rightarrow  a_3 ( x_3 +  x_4) =0.$$

For $\s_1=0$ we get the point $(0,0,1,1)$ (and its $\mathfrak S_4$-orbit).

For $x_3=0$, $\s_1=1$, hence the only possibility for $x_4$ is $a_2 x_4=0$, and we get the  point
$(0,0,0,1)$ (and its $\mathfrak S_4$-orbit) if $a_2=0$.

If $a_3=0$, either $\s_1= 0$, or  $a_2=0$, but then if the point lies in $X$ we must have again $\s_1=0$
or $a_1=0$.

\end{rem}

\begin{lemma}
The quartic $X_a : = \{ x | F(a,x)=0\}$ has the property that $Sing(X)$ contains the $\mathfrak S_4$-orbit
of the point $(0,0,1,1)$, and contains the $\mathfrak S_4$-orbit
of the point $(0,0,0,1)$ if and only if $a_1= a_2= 0$.
\end{lemma}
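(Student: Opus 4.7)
The plan is to verify both assertions by direct evaluation of $F$ and the partial derivatives $F_i$ at representative points of the two $\mathfrak S_4$-orbits, invoking symmetry to lift the pointwise statement to the whole orbit.

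First I would take $P_0 := (0,0,1,1)$ and compute the symmetric functions at $P_0$ in characteristic~$2$: one has $\s_1(P_0)=0$, $\s_2(P_0)=1$, $\s_3(P_0)=\s_4(P_0)=0$. Substituting into $F$ gives $F(P_0)=0$ for every choice of $a$. For the partials $F_i$, I note that every term in the formula
$$F_i = a_2 \s_1^2(\s_1+x_i) + a_3\bigl(\s_3 + \s_1(\s_2 + \s_1 x_i + x_i^2)\bigr) + a_4\frac{\s_4}{x_i}$$
either contains a factor $\s_1$ or $\s_3$, both of which vanish at $P_0$, or is $a_4\s_4/x_i = a_4\prod_{j\neq i}x_j$, which vanishes at $P_0$ because two coordinates are zero. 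Hence $F_i(P_0)=0$ for all $i$, and by $\mathfrak S_4$-equivariance the entire orbit of $P_0$ lies in $\Sing(X_a)$ unconditionally.

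Second, I would take $P_1 := (0,0,0,1)$, where $\s_1(P_1)=1$ and $\s_2(P_1)=\s_3(P_1)=\s_4(P_1)=0$. Then $F(P_1)=a_1$, so $F(P_1)=0$ forces $a_1=0$. The term $a_4\s_4/x_i$ still vanishes at $P_1$ (for each $i$, one of $x_j$ with $j\neq i$ is zero). A direct evaluation then shows $F_1(P_1)=F_2(P_1)=F_3(P_1)=a_2$: the $a_3$-contribution reduces to $a_3(\s_3+\s_1\s_2)=0$ at each such index. For $i=4$ the two nontrivial factors collapse via $\s_1+x_4=1+1=0$ and $\s_2+\s_1 x_4+x_4^2 = 0+1+1 = 0$ in characteristic~$2$, giving $F_4(P_1)=0$ automatically. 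Combining, $P_1\in\Sing(X_a)$ iff $a_1=a_2=0$, and again $\mathfrak S_4$-equivariance transfers the conclusion to the full orbit.

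There is no real obstacle here: the content of the lemma is entirely bookkeeping once one has the formula for $F_i$ and Remark~\ref{sing}. The only point requiring care is the characteristic-$2$ cancellations at $P_1$ when $i=4$ (both $1+1=0$ simplifications must be tracked), which is what makes the asymmetry between the two orbits — the orbit of $(0,0,1,1)$ being singular for free, the orbit of $(0,0,0,1)$ imposing two linear conditions on $a$ — come out cleanly.
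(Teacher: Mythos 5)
Your proof is correct and takes essentially the same route as the paper: the paper likewise evaluates $F$ and the partials $F_i$ at the representatives $(0,0,1,1)$ and $(0,0,0,1)$ (with the partial-derivative bookkeeping delegated to Remark~\ref{sing}, where the condition $a_2\s_1^3+a_3(\s_3+\s_1\s_2)=0$ is exactly your $F_1=F_2=a_2$ at $(0,0,0,1)$), and your explicit characteristic-$2$ computations match it. Nothing is missing.
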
 
\begin{proof}
By remark \ref{sing} it suffices to observe that $(0,0,1,1)$ belongs to $X_a$, for each choice of $ a = (a_1,a_2, a_3, a_4)$.

We calculate more generally, for later use:
$$ (Sym) \  \s_1 (b,c,1,1) = \s_3 (b,c,1,1) = b+c, \ \s_2 (b,c,1,1) = 1 + bc, \  \s_4 (b,c,1,1) = bc.$$

For $b=c=0$ we get that all $\s_i$ vanish except $\s_2 =1$, hence we are done.

For the point $(0,0,0,1)$   all $\s_i$ vanish except $\s_1 =1$, hence this point is in $X_a$ if and only if  $a_1 = 0$,
and we apply again remark \ref{sing} to infer that $a_1= a_2=0$.

\end{proof}

\begin{lemma}
The quartic $X_a : = \{ x | F(a,x)=0\}$, where we assume that $a_4=1$,  has the property that $Sing(X)$ contains the $\mathfrak S_4$-orbit
of a point $(0,1,x_3,x_4)$, with $x_3, x_4 \neq0$,  if and only if  this orbit is  the 
 $\mathfrak S_4$-orbit of $(0,1,1,z )$ and  $$a_1 z^2 = (a_2 + a_3),  \ a_2 z^2 =  1 , \ (  a_2 z +1) =  a_3 (z +  1).$$
 In particular, we must have $a_2 \neq0$, 
 and $z=1$ iff $a_2 = 1, a_1 = 1  + a_3$, while for $z\neq 1$  
 $$a_2 = \frac{ 1} {  z^2}  ,  a_3 = \frac{ 1} {  z} , \ a_1 = \frac{ 1+z} {  z^4}.$$
 
\end{lemma}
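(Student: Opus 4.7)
The plan is to compute each partial derivative $F_i$ at a point of the form $(0,1,p,q)$ with $p,q\neq 0$, exploit characteristic-$2$ cancellations to factor them, and then rule out every orbit except those of the form $(0,1,1,z)$.

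First I would substitute $x=(0,1,p,q)$ into the definitions to obtain $\s_1=1+p+q$, $\s_2=p+q+pq$, $\s_3=pq$, $\s_4=0$, and plug these into the formula $F_i = a_2\s_1^2(\s_1+x_i) + a_3(\s_3+\s_1(\s_2+\s_1 x_i + x_i^2)) + a_4\cdot\s_4/x_i$ recalled earlier (with $\s_4/x_1$ interpreted as the polynomial $x_2x_3x_4=pq$). Using the identity $\s_2+\s_1+1=pq$ together with $\s_1+p=1+q$ and $\s_1+q=1+p$ in characteristic $2$, I expect the partials with respect to $x_2,x_3,x_4$ to factor cleanly as $F_2=(p+q)(a_2\s_1^2+a_3 pq)$, $F_3=(1+q)(a_2\s_1^2+a_3 q)$, and $F_4=(1+p)(a_2\s_1^2+a_3 p)$.

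Suppose now that none of $p=1$, $q=1$, $p=q$ holds. Then $F_3=F_4=0$ force $a_3 q=a_3 p=a_2\s_1^2$, so $a_3(p+q)=0$; since $p\neq q$ this gives $a_3=0$ and hence $a_2\s_1^2=0$. But then $F_1$ collapses to $a_4 pq=pq\neq 0$, a contradiction. Hence at least one of the three coincidences must hold, and in each case the projective point $[0{:}1{:}p{:}q]$ lies in the $\mathfrak S_4$-orbit of some $[0{:}1{:}1{:}z]$: for $p=1$ take $z=q$, for $q=1$ take $z=p$, and for $p=q\neq 0$ rescale to $[0{:}1/p{:}1{:}1]$ and permute to get $z=1/p$.

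Finally, at the representative $(0,1,1,z)$ the table $(Sym)$ (with $b=0$, $c=z$) gives $\s_1=z$, $\s_2=1$, $\s_3=z$, $\s_4=0$. Substituting into $F$ and into each $F_i$ yields $F=a_1z^4+(a_2+a_3)z^2$ (so $a_1z^2=a_2+a_3$), $F_1=z(a_2z^2+1)$ (so $a_2z^2=1$), $F_4\equiv 0$ automatically, and $F_2=F_3=z(1+z)(a_2z+a_3)$; using $a_2z^2=1$, the vanishing of $F_2$ rewrites in characteristic $2$ as $a_2z+1+a_3z+a_3=0$, i.e.\ $(a_2z+1)=a_3(z+1)$. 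This gives the three stated equations; after them the conclusions are routine: $a_2z^2=1$ with $z\neq 0$ forces $a_2\neq 0$; the case $z=1$ yields $a_2=1$ and $a_1=1+a_3$; and the case $z\neq 1$ is solved sequentially as $a_2=1/z^2$, then $a_3=1/z$ from the third equation, then $a_1=(1+z)/z^4$ from the first. The only real subtlety is the projective identification of the coincidence case $p=q$ with an $(0,1,1,z)$-orbit; the remainder is mechanical characteristic-$2$ algebra.
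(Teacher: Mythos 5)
Your proposal is correct and follows essentially the same route as the paper: both compute the partial derivatives $F_i$ at a point $(0,1,p,q)$ from the same formula, use characteristic-$2$ simplifications to force two of the nonzero coordinates to coincide, and then read off the three conditions $a_1z^2=a_2+a_3$, $a_2z^2=1$, $a_2z+1=a_3(z+1)$ at the representative $(0,1,1,z)$. The only cosmetic difference is in the coincidence step: the paper observes that $x_2,x_3,x_4$ are roots of one quadratic which cannot vanish identically since $a_4\sigma_3\neq 0$, while you factor $F_3,F_4$ and reach the same contradiction through $F_1=a_4pq\neq 0$; your factorizations of $F_2,F_3,F_4$ check out, so both arguments rest on the same fact.
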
\label{onezero}
\begin{proof}
For $j=1$ we know that 
$$a_2 \s_1^3  + a_3 (  \s_3 + \s_1 \s_2 )) + a_4  \s_3=0,$$
hence for $j=2,3,4$
$$a_2 \s_1^2   x_i + a_3 (   \s_1 ( \s_1 x_i + x_i^2)) + a_4  \s_3=0.$$
This is an equation of degree $2$, hence it has at most two roots, not being identically zero.

Hence we may assume that our point is of the form $(0,1,1,z )$, and then $\s_1 = z, \ \s_2 = 1, \ \s_3 = z, 
\s_4 = 0$.

Since we want $z \neq0$, the conditions that the point is in $X$, plus that we have a singular point (hence $1,z$ are roots of the quadratic equation) are:
$$a_1 z^2 = (a_2 + a_3),  \ a_2 z^2 =  a_4 , \ (  a_2 + a_3 ) z = a_3 +  a_4.$$

\end{proof}

Consider next a point of the form $(b,c,1,1)$,  for $ b \neq c $,$ b,c \neq 1$.
Its orbit consists of $12$ points.

\begin{prop}\label{inf}
The quartic 
$$ X_a = \{ x| a_1 \s_1^4 + a_2 \s_1^2 \s_2 +  a_3 \s_1 \s_3 + a_4  \s_4  = 0\} $$
contains the $\mathfrak S_4$-orbit of the point $(1,1,0,0)$, and 
the $\mathfrak S_4$-orbit of the point $(b,c,1,1)$, if and only if, setting $ w: = (b + c) \neq 0$,
the coefficients are proportional to 
$$ a_3 = 1, \ a_2 w = 1, \ a_4 = w, \ a_1 w^3 = 1 + w.$$
In particular, if these conditions are satisfied, then $X$ has infinitely many singular points.

\end{prop}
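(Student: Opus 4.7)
The plan is to substitute $(b,c,1,1)$ into the defining equation and the partial-derivative formulas $F_i$ given at the start of this subsection, then extract the necessary and sufficient relations on the $a_i$'s, exploiting two characteristic-$2$ cancellations. From the formula $(Sym)$ recalled in the earlier lemma, at the point $(b,c,1,1)$ one has $\sigma_1 = \sigma_3 = w := b+c$, $\sigma_2 = 1+bc$ and $\sigma_4 = bc$. The first part of the proposition, namely that $X_a$ contains the orbit of $(1,1,0,0)$, is automatic by the earlier lemma, so the content is the condition for the orbit of $(b,c,1,1)$ to lie in $Sing(X)$.

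Plugging these values into the expression for $F_i$ and simplifying (using $1+1=0$ and $w = b+c$), I expect to find
$$F_1 = c\,(a_2 w^2 + a_4), \qquad F_2 = b\,(a_2 w^2 + a_4).$$
Since $w = b + c \neq 0$, at least one of $b, c$ is nonzero, so the vanishing of $F_1, F_2$ forces $a_4 = a_2 w^2$. Substituting this back, the computation for $F_3 = F_4$ should collapse to the factored form
$$F_3 = w \cdot (w + 1 + bc) \cdot (a_2 w + a_3).$$
The middle factor equals $bc + b + c + 1 = (b+1)(c+1)$ in characteristic $2$, hence is nonzero under the hypothesis $b, c \neq 1$; together with $w \neq 0$, this forces $a_3 = a_2 w$. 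Finally, plugging the values of the $\sigma_i$ into $F = 0$ at $(b,c,1,1)$ and using the two relations just derived collapses the equation to $a_1 w^2 = a_2(1+w)$. Normalizing so that $a_3 = 1$ (which is legitimate, since $a_2 \neq 0$ and $w \neq 0$ imply $a_3 \neq 0$) yields the four stated identities $a_3 = 1$, $a_2 w = 1$, $a_4 = w$, $a_1 w^3 = 1+w$.

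For the converse and the infinitude claim, the derived relations are equivalent to $a_2 w^2 + a_4 = 0$ and $a_2 w + a_3 = 0$. When these hold, the formulas above show that $F_1, F_2, F_3, F_4$ evaluated at any point $(b', c', 1, 1)$ with $b' + c' = w$ all vanish (independently of the value of $b'c'$), and a direct check shows $F$ itself also vanishes on this set. Hence the entire projective line $L_w := \{(b', c', 1, 1) : b' + c' = w\} \subset \PP^3$ is contained in $Sing(X)$, so $X$ has infinitely many singular points. The main bookkeeping obstacle is the clean factorization of $F_3$: the characteristic-$2$ identity $(b+1)(c+1) = bc + b + c + 1$ is precisely what rules out the alternative way of making the middle factor vanish, and it is therefore what forces the coefficient relation $a_3 = a_2 w$ and through it the whole system.
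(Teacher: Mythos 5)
Your proposal is correct and follows essentially the same route as the paper: substitute the point $(b,c,1,1)$ into the equations $F=F_i=0$, exploit the characteristic-$2$ cancellations to extract the relations $a_4=a_2w^2$, $a_3=a_2w$, $a_1w^2=a_2(1+w)$, and observe that these depend only on $w=b+c$, so the whole family $\{(b',c',1,1)\,:\,b'+c'=w\}$ lies in $Sing(X)$. The only difference is bookkeeping: the paper encodes the derivative conditions as ``$1,b,c$ are the roots of an explicit cubic'' and reads off the coefficient relations via the sum and product of the roots, whereas you factor each $F_i$ directly (e.g.\ $F_1=c(a_2w^2+a_4)$, $F_3=w(b+1)(c+1)(a_2w+a_3)$), which yields the same relations.
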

\begin{proof}
By the previous Lemma  $ Sing(X)$ contains the $\mathfrak S_4$-orbit
of the point $(0,0,1,1)$.

 We need only to see that the point $(b,c,1,1)$ is a singular point of $X_a$ for the above choices of $a$,
 depending on the two parameters $b,c$.
 
 First of all we get a point of $X_a$ since by formula {\em (Sym)} 
 $$(1+b+c) (b+c)^{-3}  \s_1^4 + (b+c)^{-1} \s_1^2 \s_2 +   \s_1 \s_3 + (b+c)   \s_4  = 0$$
 at the point  $(b,c,1,1)$, since
$$(1+b+c) (b+c)  + (b+c)  (1+bc)  +   (b+c)^2 + (b+c)   bc  = (b+c) ( b+ c + bc + (b+c) + bc) = 0.$$ 

For the partial derivatives, the condition for the singular points with all coordinates different from zero
boils down, for the given point, to

 $$ \F_i x_i = x_i (b+c)  ( b+ c + x_i) +  x_i (  b+c ) ( bc + (b+c) x_i + x_i^2)) + (b+c)   bc =0 $$
 which is satisfied if we show that $1,b,c$ are roots of the equation (we divide by $(b + c)$
 $$  z^3 + z^2  (b+c+1) + z   ( b+ c + bc ) +   bc =0 .$$
 We easily find that $1$ is a root, and then the other two roots are shown to be $b,c$ since their sum equals $b+c$, 
 and their product equals $bc$.
 
 Conversely, if $1,b,c$ are roots of the equation
 $$x_i  \F_i =   a_2 (b+c)^2 ( (b+c)  z + z^2 ) + a_3  (   (b+c)  bc  z + (b+c) ( (b+c) z^2 + z^3)) + a_4  bc = 0,$$
 that is, of 
 $$z^3 a_3 (b+c)  + z^2 (  a_2 + a_3)(b+c)^2   + z  ( a_2 (b+c)^2  + a_3 bc  )(b+c)   + a_4  bc = 0,$$
 then the equation has degree $3$, and we may normalize $a_3=1$ and divide by $(b+c)$, 
 obtaining
 $$z^3   + z^2 (  a_2 + 1)(b+c)   + z  ( a_2 (b+c)^2  +  bc  )   + a_4 \frac{ bc}{b+c} = 0,$$
 and looking at the sum of the roots, we get
 $$ 1 + b + c = (  a_2 + 1)(b+c) \Leftrightarrow   a_2 (b+c)= 1,$$ 
 moreover looking at the product of the roots we see that
 $$ a_4 = (b+c),$$ finally the condition 
 $  a_2 (b+c)^2  +  bc = b+c + bc $ is automatical.

 To finish the proof, we observe that 
 since the equation depends on $w$, we get infinitely many singular points varying $b,c$ with $b + c= w$.

\end{proof}

\begin{cor}
If $Sing(X)$ is finite, then $X$ has at most $10$ singular points.

\end{cor}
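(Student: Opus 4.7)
The plan is to reduce the corollary directly to the trichotomy of Theorem \ref{symmetric}: once we know that $|Sing(X)|$ is either $6$, $10$, or infinite, the finite case yields the bound $10$ at once. Hence the substance of the argument is the orbit-by-orbit classification that the preceding auxiliary results have begun.

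Since $F$ is $\mathfrak{S}_4$-invariant, $Sing(X)$ is a union of $\mathfrak{S}_4$-orbits on $\mathbb{P}^3_K$; these orbits have sizes $1, 4, 6, 12$, or $24$, classified by the coincidence pattern of the homogeneous coordinates. The preceding results already cover the most important ones: the orbit of $(0,0,1,1)$ (size $6$) is always singular, by the first Lemma of the section; the orbit of $(0,0,0,1)$ (size $4$) is singular iff $a_1 = a_2 = 0$, again by that Lemma; the orbit of a point $(0,1,1,z)$ with $z \neq 0,1$ (size $12$) is controlled by the Lemma on the ``one-zero'' case; and the orbit of $(b,c,1,1)$ with $b\neq c$ and $b,c\neq 1$ (size $12$) is excluded outside the infinite-singularity case by Proposition \ref{inf}.

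The remaining orbit types to dispose of are $(1,1,1,b)$ (size $4$), $(1,1,b,b)$ (size $6$), $(0,0,1,b)$ with $b\neq 0,1$ (size $12$), $(0,1,b,c)$ (size $24$), and $(1,b,c,d)$ (size $24$). For each, I would substitute the representative into the explicit formula
$$F_i = a_2 \sigma_1^2(\sigma_1 + x_i) + a_3\bigl(\sigma_3 + \sigma_1(\sigma_2 + \sigma_1 x_i + x_i^2)\bigr) + a_4 \frac{\sigma_4}{x_i}$$
derived at the start of the section, and analyze the resulting system in the free parameter(s). The expected outcome is that $(1,1,1,b)$ permits at most one admissible value of $b$, contributing $4$ extra singular points; every other orbit either gives no singular points for general coefficients, or forces a one-parameter family of singular points (hence $|Sing(X)| = \infty$), the model for the latter being exactly Proposition \ref{inf}. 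A final compatibility check shows that the two size-$4$ orbits $(0,0,0,1)$ and $(1,1,1,b)$ cannot both sit in $Sing(X)$: the first requires $a_1=a_2=0$, which one verifies is incompatible with the isolated solution for the second without the quartic becoming reducible.

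The main obstacle will be the $(1,b,c,d)$ orbit with three free parameters, where the system $F_i=0$, $i=1,\ldots,4$, must be shown to force either incompatibility or a positive-dimensional singular locus. The delicate point is exploiting the characteristic-$2$ collapses (for instance $\binom{3}{1}=\binom{3}{2}=1$) in the same spirit as Proposition \ref{inf}, to reveal that generic solutions lie on a curve rather than at isolated points, so that $Sing(X)$ becomes infinite whenever such an orbit enters it.
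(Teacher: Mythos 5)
Your framework---decompose $Sing(X)$ into $\mathfrak{S}_4$-orbits and analyze each coincidence pattern of coordinates---is in the same spirit as the paper's argument, but your treatment of the decisive case is not a proof. For the orbit $(1,b,c,d)$ (and in general for singular points with all coordinates nonzero and more than two distinct values) you only state an \emph{expected outcome} and explicitly call this "the main obstacle," hoping that characteristic-$2$ collapses will force a positive-dimensional singular locus. That is precisely the step the corollary needs: a single size-$24$ orbit would already ruin the count, so without this the bound $10$ is not established. Note also that your opening move, "reduce the corollary to the trichotomy of Theorem \ref{symmetric}," would be circular, since that theorem is proved by means of this corollary and the preceding lemmas; you do acknowledge that the classification itself is the substance, but then you do not complete it.

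The missing idea, which is the heart of the paper's proof, is that for a singular point with $x_i\neq 0$ for all $i$, multiplying $F_i=0$ by $x_i$ gives
$$x_i F_i \;=\; a_2\sigma_1^2(\sigma_1 x_i + x_i^2) + a_3\bigl((\sigma_3+\sigma_1\sigma_2)x_i + \sigma_1(\sigma_1 x_i^2 + x_i^3)\bigr) + a_4\sigma_4 \;=\;0,$$
so each coordinate is a root of one and the same cubic polynomial, whose coefficients depend only on the $\sigma_j$ and the $a_j$. Hence the four coordinates take at most three distinct values: the all-distinct orbit is impossible unless the cubic vanishes identically, which forces $\sigma_4=0$ and contradicts $x_i\neq 0$ for all $i$. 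If exactly three values occur one is in the situation of Proposition \ref{inf}, so $Sing(X)$ is infinite; if two values occur, the case $(1,1,b,b)$ collapses to the orbit of $(0,0,1,1)$, and the case $(1,1,1,b)$ gives $a_2(1+b)^2=a_4\neq 0$, pinning down $b$ uniquely and forcing $a_2\neq 0$ (which is also why this orbit is incompatible with the orbit of $(0,0,0,1)$, confirming your compatibility check). A parallel observation with a quadratic instead of a cubic underlies the one-zero Lemma, which already shows that a singular point with exactly one vanishing coordinate must lie in an orbit of type $(0,1,1,z)$; so your separate case $(0,1,b,c)$ is not an open case but is excluded there. In short, your orbit list and bookkeeping are reasonable, but without the common-polynomial observation the proposal leaves the crucial orbits unresolved and does not prove the bound.
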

\begin{proof}

 The singular points with two coordinates equal zero are only in the orbit of $(0,0,1,1)$, 
 while we have singular points with three coordinates equal to  zero iff $a_1=a_2=0$.
 
 If we have a singular point with just one coordinate equal to zero, then we may apply lemma \ref{onezero}
 and find that the coefficients are as described in proposition \ref{inf}, hence we have infinitely many singular points.

Concerning  the singular points with $x_i \neq 0 \ \forall 1=1,2,3,4$ the coordinates are solutions of the degree $3$ equation:

$$x_i  \F_i =   a_2 \s_1^2 ( \s_1 x_i + x_i^2 ) + a_3  ( (  \s_3 + \s_1 \s_2) x_i + \s_1 ( \s_1 x_i^2 + x_i^3)) + a_4  \s_4 = 0.$$

Therefore these coordinates cannot be all different, unless the equation is identically zero, which means that 

$a_3 \s_1 = 0, (a_2 + a_3) \s_1^2 = 0, a_2 \s_1^3 + a_3 (  \s_3 + \s_1 \s_2)  = 0 ,  \s_4=0 
$, hence $\s_4 =0$ and the $4$ coordinates are not all different from $0$.

If the coordinates take three distinct values, we obtain the situation of the previous proposition \ref{inf}, 
hence infinitely many singular points.

Therefore remains only the possibility of just two values. 

If the point is $(1,1,b,b)$ then $\s_1=\s_3 = 0$, $\s_2 = 1 + b^2$, $\s_4 = b^2$.

But then the equation of $X$ implies $\s_1=\s_3 = \s_4 = 0$, hence we get $b=0$, the usual orbit.

If instead the point is $(1,1,1,b)$ it follows that $\s_1= 1 + b, \s_4 = b, \s_2 =  1+b, \s_3 = 1+b.$

It cannot be $b=1$, since $\s_1=0 \rightarrow \s_4=0$.

The condition that $1,b$ are roots of the cubic equation is equivalent to 
$$ a_2 (1+b)^2 = a_4,$$
and since $(1+b) \neq 0, a_4 \neq 0$, it cannot be $a_2=0$.

Since $a_2 \neq 0$,  $b$ is uniquely determined by this equation and we have at most $10$ singular points
also in this case.

\end{proof}

\begin{cor}
In the pencil of quartics $$ X_c: = \{ c \s_1 \s_3 + \s_4 =0\}$$
the quartic has always $10$ singular points except for $c=0$.

The singularities are nodes ($A_1$-singularities).

\end{cor}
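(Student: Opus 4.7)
For $c=0$ the quartic $X_0 = \{\sigma_4=0\} = \{x_1 x_2 x_3 x_4 = 0\}$ is the union of the four coordinate planes, hence reducible and not normal. So assume $c\neq 0$, and set $a_1=a_2=0$, $a_3=c$, $a_4=1$ in the notation above.

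The plan is to enumerate $\mathrm{Sing}(X_c)$ by applying the preceding lemmas in sequence. First, the orbit of $(0,0,1,1)$ always lies in $\mathrm{Sing}(X_c)$, contributing $6$ points. Since $a_1=a_2=0$, the orbit of $(0,0,0,1)$ also lies in $\mathrm{Sing}(X_c)$, contributing $4$ more points. I then rule out all other orbits: a singular point $(0,1,x_3,x_4)$ with $x_3,x_4\neq 0$ would by Lemma 2.5 require $a_2 z^2 = 1$, impossible with $a_2=0$. For singular points with no zero coordinates, the two-value orbit $(1,1,1,b)$ would require $a_2(1+b)^2 = a_4 = 1$, again impossible; the orbit $(1,1,b,b)$ forces $b=0$ as shown in the corollary's proof; and the general three- or four-value case falls under Proposition 3.5, which would force infinitely many singular points (contradicting normality). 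Thus $|\mathrm{Sing}(X_c)|=10$.

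It remains to verify that each singular point is a node ($A_1$). By the $\mathfrak S_4$-symmetry it suffices to check $(0,0,0,1)$ and $(0,0,1,1)$. At $(0,0,0,1)$, dehomogenizing $x_4 = 1$, the lowest degree terms of $F$ are $c(x_1 x_2 + x_1 x_3 + x_2 x_3)$, whose strange point is $(1:1:1)$, where the form evaluates to $c\neq 0$; by the normal form discussion in Section~1.1, this is a smooth conic, so $(0,0,0,1)$ is a node. At $(0,0,1,1)$, dehomogenize $x_3=1$ and translate $x_4 = 1 + w_4$; a direct expansion yields tangent cone
\[
Q = c x_1^2 + c x_2^2 + c w_4 x_1 + c w_4 x_2 + x_1 x_2,
\]
whose gradient in char $2$ vanishes only at the projective point $(c:c:1)$, where $Q$ takes the value $c^2\neq 0$. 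Hence $Q$ is a smooth conic and the singularity is an $A_1$-point.

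The main obstacle is the case analysis of singular points with all coordinates nonzero: one has to check carefully that the two-value orbits and the possibility of an orbit giving rise to the one-parameter family of Proposition 3.5 are both excluded by $a_2=0, a_4=1$. The tangent cone computations are routine but must be carried out in characteristic $2$, keeping track of vanishing derivatives to correctly identify the "strange point" of the conic and verify smoothness through nonvanishing of $Q$ there.
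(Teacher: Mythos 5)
Your proposal is correct and follows essentially the same route as the paper's own proof: for $c\neq 0$ one specializes $a_1=a_2=0$, $a_3=c$, $a_4=1$, gets the orbits of $(0,0,1,1)$ and $(0,0,0,1)$ from Remark \ref{sing} and the lemma on points with two vanishing coordinates, excludes points with exactly one zero coordinate via Lemma \ref{onezero} (your ``$a_2z^2=1$'' obstruction is exactly the paper's), excludes the two-value orbits $(1,1,b,b)$ and $(1,1,1,b)$ by the same computations, and checks the tangent cones at the two orbit representatives; your explicit computation at $(0,0,1,1)$ agrees with the paper's (which uses the coordinates $x_1,x_2,\sigma_1$ and the quadratic part $x_1x_2+\sigma_1(x_1+x_2)$, the same form after substituting $\sigma_1=x_1+x_2+w_4$ and up to the harmless factor $c$), and your strange-point criterion is the right smoothness test in characteristic $2$.

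One small inaccuracy to fix: a singular point with all four coordinates nonzero and pairwise distinct does \emph{not} ``fall under'' Proposition \ref{inf}, whose hypothesis is an orbit of the form $(b,c,1,1)$, i.e.\ three values with one repeated. That case is excluded instead as in the preceding corollary: the nonzero coordinates of such a point are all roots of the single cubic $x_iF_i=0$ in $x_i$ (with the $\sigma_j$ as coefficients), so four distinct values would force that cubic to vanish identically, giving $\sigma_4=0$ and contradicting the nonvanishing of all coordinates. With that substitution your enumeration is complete, and the count $6+4=10$ together with the two nodal tangent cones proves the statement.
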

\begin{proof}
By remark \ref{sing} the only singular points with at least two coordinates equal to zero
are just the orbits of $(0,0,1,1)$ and $(0,0,0,1)$ if $a_3 = c\neq 0$.

Points with just one coordinate equal to zero are excluded by lemma \ref{onezero},
while singular points with nonzero coordinates taking three values are excluded by
proposition \ref{inf}. 

For points of type $(1,1,b,b)$ $\s_1=0$  hence they cannot lie in $ X_c$, since $a_4=1$.

For points of type $(1,1,1,b)$, $b\neq 1$, they lie in $ X_c$ iff $ c (1+b)2 = b$,
but as we saw the condition that we have a singular point boils down to
$$ a_2 (1+b)^2 = a_4=1,$$ impossible since $a_2=0$.

For the last assertion, at the point $(0,0,1,1)$ we have $x_3=1$ and local coordinates $x_1, x_2, \s_1$: 
then the quadratic part of the equation is
$$ x_1 x_2 + \s_1 (x_1 +x_2)$$
 and we have a node.

Likewise, at the point $(0,0,0,1)$ we have $x_4=1$ and local coordinates $x_1, x_2, x_3$:
then the quadratic part of the equation is
$$ \s_3 = x_1 x_2 +x_1 x_3 + x_2 x_3 $$
and again we have a node.

\end{proof}

\bigskip

\bigskip

\section{Appendix}
Here another construction of  quartics with $14$ singular points,  due to Matthias Sch\"utt.

Consider the quartic $X$ of equation

$$X : = \{  F (w,x_1,x_2,x_3) : = w^4 + w^2 x_1^2 + B(x) = 0\}.$$

The singular points are the solutions of 
$$ \nabla B(x)= 0 , w^4 + w^2 x_1^2 + B(x) = 0.$$

For each $x$, the polynomial $w^4 + w^2 x_1^2 + B(x)$ is the square of a quadratic polynomial in $w$,
which is separable if $x_1\neq 0$.

Let $$\sC : = \{ x \in \PP^2 | \nabla B(x)= 0 \}.$$

Hence $ |Sing(X)| = 2 | \sC|$ provided $ \sC \cap \{x_1=0\} = \emptyset,$
a condition which can be realized for the choice of a general  linear form
once we find a quartic $B$ with $\sC$ finite.

The first choice is to take $B$ the product of $4$ general linear forms, as in Step IV)
of proposition \ref{inseparable}, where $\sC$ consists of $7$ points,
hence we get $X$ with $14$ singular points.

Also for $\{ B = 0\}$ smooth we get $\sC$ with $7$ points. To avoid using advanced algebraic geometry,
let us consider the Klein quartic
$$ B : = x_1^3 x_2 + x_2^3 x_3 + x_3^3 x_1.$$
Then $$ B_1 = x_1^2 x_2 + x_3^3 ,  \  B_2 = x_2^2 x_3 + x_1^3 , \ B_3 = x_3^2 x_1 + x_2^3 .$$

Using the first equation and taking the cubes of $x_2^2 x_3 = x_1^3$, we find (since $x_i \neq0$ for the points of $\sC$)
that $ y = \e x$, with $\e^7=1$, and $ z = \e^5 x$. Hence we get the seven points $ \{ (1, \e, \e^5)| \e^7 =1\}$.

Recall that the set $\sC$ has always cardinality at most $7$ in view of Lemma \ref{quartic},
so this construction leads to no more than $14$ singular points.

\bigskip

{\bf Acknowledgement:} I would like to thank Stephen Coughlan,  the referee of \cite{kummers},
  and especially Matthias Sch\"utt for interesting remarks, helpful comments and encouragement.

\bigskip


\begin{thebibliography}{10}

\bibitem[Art66]{artin}
Michael Artin. 
\newblock On isolated rational singularities of surfaces. 
\newblock{\em Amer. J. Math. 88},  129--136, 1966. 


\bibitem[Art74]{artinSS}
Michael Artin. 
\newblock
Supersingular K3 surfaces. 
\newblock{\em Ann. Sci. \'Ec. Norm. Sup\'er. (4) 7, 543--567 (1974).}

\bibitem[Bea79]{angers}
Arnaud Beauville. 
\newblock Sur le nombre maximum de points doubles d'une surface dans $\PP^3 (\mu(5)=31)$. 
\newblock{\em Journ\'ees de G\'eometrie Alg\'ebrique d'Angers, Juillet 1979/Algebraic Geometry, Angers, 1979, }
pp. 207--215, Sijthoff - Noordhoff, Alphen aan den Rijn,  1980.

\bibitem[Cat20]{nodalsurfaces}
Fabrizio Catanese et al.
\newblock Varieties of nodal surfaces, coding theory and discriminants of cubic hypersurfaces. 
 \newblock{\em Preliminary version, (2020) 135 pages.}
 
 \bibitem[Cat21]{kummers}
Fabrizio Catanese.
\newblock Kummer quartic surfaces, strict self-duality, and more.
 \newblock{\em arXiv:2101.10501, to appear in a volume dedicated to Ciro Ciliberto.}



\bibitem[Cay69]{cayley}
Arthur Cayley.
\newblock A memoir on cubic surfaces. 
\newblock{\em Trans. of London CLIX,} 231--326 (1869).

\bibitem[Char16]{charles}
Francois Charles, 
\newblock Birational boundedness for holomorphic symplectic varieties, Zarhin's trick for K3
 surfaces, and the Tate conjecture. 
\newblock{\em Ann. Math. (2) 184, No. 2, 487--526 (2016).}

\bibitem[Crem68]{cremona}
Luigi Cremona.
\newblock  M\'emoire de g\'eom\'etrie pure sur les surfaces du troisi\`eme ordre. 
\newblock{\em  J. Reine Angew. Math. 68} (1868), 1--133. 

 

\bibitem[Dol12]{dolgachev}
 Igor Dolgachev.
 \newblock Classical algebraic geometry. A modern view. 
  \newblock{\em Cambridge University Press, Cambridge, (2012). xii+639 pp.}
 
  \bibitem[Dol20]{igornams}
 Igor Dolgachev.
 \newblock Kummer Surfaces: 200 years of study
 \newblock{\em Notices of the A.M.S. vol. 67, N. 10,  (2020) 1527-1533.}

 \bibitem[Dol-Ko03]{dolga-kondo}
 Igor Dolgachev, Shigeyuki Kondo,
 \newblock  A supersingular K3 surface in characteristic 2 and the Leech lattice.
 \newblock{\em Int. Math. Res. Not. 2003, No. 1, 1--23 (2003).}
 

 \bibitem[G-D-94]{dorrego}
Maria R. Gonzalez-Dorrego, 
 \newblock  (16,6) configurations and geometry of Kummer surfaces in $\PP^3$. 
  \newblock{\em Mem. Amer. Math. Soc. 107 (1994), no. 512, vi+101 pp.}

 
\bibitem[G\"op47]{goepel}
Adolph Gustav G\"opel.
 \newblock  Theoriae transcendentium Abelianarum primi ordinis adumbratio. 
\newblock{\em  J. Reine Angew. Math. 35, 277--312 (1847).}

 
  \bibitem[G-R70]{g-r}
 Hans Grauert, Oswald  Riemenschneider.
\newblock Verschwindungss\"atze f\"ur analytische Kohomologiegruppen auf komplexen R\"aumen. 
\newblock{\em Invent. Math. 11, 263--292 (1970).}
 
 \bibitem[Hudson05]{hudson}
 R. W. H. T.  Hudson,
\newblock Kummer's quartic surface.  
\newblock{\em  Revised reprint of the 1905 original, with a foreword by W. Barth. Cambridge Mathematical Library. Cambridge University Press, Cambridge, (1990) xxiv+222 pp.}


\bibitem[JafRub97]{j-r}
D.~B. Jaffe and D.~Ruberman.
\newblock A sextic surface cannot have $66$ nodes.
\newblock {\em Journal of Algebraic Geometry}, 6(1):151--168, 1997.

\bibitem[Jes16]{jessop1916quartic}
C.~M. Jessop.
\newblock  Quartic surfaces with singular points.
\newblock {\em Cambridge University Press, 1916.}

\bibitem[KatKon12]{katsurakondo}
Toshiyuki Katsura,  Shigeyuki Kondo, 
\newblock A note on a supersingular K3 surface in characteristic 2. 
\newblock {\em  Faber, Carel (ed.) et al., Geometry and arithmetic. Based on the conference, Island of Schiermonnikoog, Netherlands, September 2010. Z\"urich: European Mathematical Society (EMS)  EMS Series of Congress Reports, 243--255 (2012).}


\bibitem[Kum64]{kummer}
Ernst Eduard Kummer.
\newblock On surfaces of degree four containing sixteen singular points. (\"Uber die Fl\"achen vierten Grades mit sechzehn singul\"aren Punkten.) 
\newblock {\em Berl. Monatsber. 1864}, 246--260 (1864).

\bibitem[Lied16]{liedtke}
Christian Liedtke, 
\newblock Lectures on supersingular K3
 surfaces and the crystalline Torelli theorem. 
\newblock {\em Faber, Carel (ed.) et al., K3 surfaces and their moduli. Basel: Birkh\"auser/Springer. 
Progress in Mathematics 315, 171--235 (2016).}

\bibitem[Milne80]{milne}
James Stuart Milne.
\newblock \'Etale cohomology. 
\newblock {\em Princeton Mathematical Series. 33. Princeton, New Jersey: Princeton University Press. XIII, 323 p.  (1980).}

\bibitem[M-O14]{m-o2}
Shigeru Mukai, Hisanori Ohashi. 
\newblock 
The automorphism group of Enriques surfaces covered by symmetric quartic surfaces. 
\newblock {\em Hacon, Christopher D. (ed.) et al., Recent advances in algebraic geometry. A volume in honor of Rob Lazarsfeld' s 60th birthday. Based on the conference, Ann Arbor, MI, USA, May 16--19, 2013. Cambridge: Cambridge University Press . London Mathematical Society Lecture Note Series 417, 307--320 (2014)}.

\bibitem[Mum61]{mumford}
David Mumford.
\newblock
The topology of normal singularities of an algebraic surface and a criterion for simplicity. 
\newblock {\em Publ. Math., Inst. Hautes \'Etud. Sci. 9, 5--22 (1961).}




\bibitem[Nik75]{nikulin-kum}
Vyacheslav V.~Nikulin. 
\newblock Kummer surfaces. 
\newblock{\em  Izv. Akad. Nauk SSSR Ser. Mat. 39 }
(1975), no. 2, 278--293, 471.

\bibitem[PigTon09]{p-t}
Roberto Pignatelli and Fabio Tonoli.
\newblock On Wahl's proof of $\mu(6)=65$ . 
\newblock{\em Asian J. Math. 13 }, no. 3, 307--310, 2009.



\bibitem[Rohn86]{rohn86}
K.~Rohn.
\newblock {\em Die {F}l{\"a}chen vierter {O}rdnung hinsichtlich ihrer
  {K}notenpunkte und ihrer {G}estaltung}, volume~9.
\newblock S. Hirzel, 1886.

\bibitem[Roh87]{rohn87}
K.~Rohn.
\newblock Die {F}l{\"a}chen vierter {O}rdnung hinsichtlich ihrer {K}notenpunkte
  und ihrer {G}estaltung.
\newblock {\em Mathematische Annalen}, 29(1): 81--96, (1887).

\bibitem[Ru-Sha79]{rudakov-shafarevich}
Alexei Nikolaevich Rudakov,  Igor Rotislav Shafarevich, 
\newblock  Supersingular K3
 surfaces over fields of characteristic 2.
   \newblock {\em Math. USSR, Izv. 13, 147--165 (1979).}
 
 \bibitem[S-D74]{sd}
 Bernard Saint Donat.
 \newblock Projective models of K3 surfaces.
   \newblock {\em Amer. J. Math. 96 } (1974), 602--639


\bibitem[Schl63]{schlaefli}
Ludwig Schl\"afli,
 \newblock On the distribution of surfaces of the third order into species, in reference to the absence or presense of singular points, 
 and the reality of their lines, 
 \newblock {\em Phil. Trans. of Roy. Soc. London, 6}  (1863), 201--241.

 
\bibitem[Seg52]{bsegre}
Beniamino Segre.
\newblock Sul massimo numero di nodi delle superficie algebriche.
\newblock {\em Atti Acc. Ligure}, 10(1):15--2, 1952.

\bibitem[Ser70]{serre}
 Jean-Pierre Serre,
 \newblock Cours d?arithm\'etique (A course in arithmetic). 
\newblock {\em Le math\'ematicien. 2. Paris: Presses Universitaires de France. 188 p. (1970).}

\bibitem[Shio74]{shioda}
Tetsuji Shioda, 
\newblock Kummer surfaces in characteristic 2. 
\newblock {\em  Proc. Japan Acad. 50, 718--722 (1974).}

\bibitem[Shio78]{shioda2}
Tetsuji Shioda, 
\newblock  Supersingular K3 surfaces.
\newblock {\em Algebraic geometry, Proc. Summer Meet., Copenh. 1978, Lect. Notes Math. 732, 564--591 (1979).}

\bibitem[vStra93]{duco}
Duco van Straten.
\newblock A quintic hypersurface in $\PP^4$ with $130$ nodes.
\newblock {\em  Topology Vol. 32 }, no. 4, 857--864, (1993).


\bibitem[Weber77]{weber}
Heinrich Martin Weber.
\newblock  On the Kummer surface of order four with sixteen nodes and its relation to the theta function with two variables. (Ueber die Kummer'sche Fl\"ache vierter Ordnung mit sechszehn Knotenpunkten und ihre Beziehung zu den Thetafunctionen mit zwei Ver\"anderlichen.) 
\newblock {\em Borchardt J. 84, 332--355 (1877).}


\end{thebibliography}
\end{document}